\documentclass[a4paper]{amsart}
\setlength{\marginparwidth}{1.2in}
\usepackage[new]{old-arrows}
\usepackage[T1]{fontenc}
\usepackage[english]{babel}
%
%

\usepackage{kpfonts}

\usepackage{amsmath,amssymb,amsthm,enumerate,xspace}
\usepackage{comment}
\usepackage{accents}
\usepackage[colorlinks=true,citecolor=blue,linkcolor=blue]{hyperref}
\usepackage[all]{xy}
\usepackage{tikz}
\usetikzlibrary{arrows}
\usepackage{url}

\usepackage{mathrsfs}
\usepackage{cleveref}
\usepackage{graphicx} 

\hyphenation{homeo-mor-phic homeo-mor-phism homeo-mor-phisms}


\numberwithin{equation}{section}

\newtheorem{thm}{Theorem}[section]

\newtheorem{prop}[thm]{Proposition}
\newtheorem{lem}[thm]{Lemma}

\newtheorem*{openproblem*}{Problem}
\newtheorem*{quest*}{Question}
\newtheorem*{problem*}{Problem}
\newtheorem*{claim*}{Claim}

\theoremstyle{definition}
\newtheorem{defn}[thm]{Definition}

\theoremstyle{remark}
\newtheorem{rem}[thm]{Remark}

\newcommand{\bD}{\mathbb{D}}

\newcommand{\bR}{\mathbb{R}}
\newcommand{\bS}{\mathbb{S}}

\newcommand{\bZ}{\mathbb{Z}}

\newcommand\Diff{\mathrm{Diff}}
\newcommand\Homeo{\mathrm{Homeo}}

\newcommand\BDiff{\mathrm{BDiff}}

\newcommand{\hcoker}{/\!\!/}

\newcommand{\tH}{\text{\textnormal{Homeo}}}

\newcommand{\BD}{\mathrm{B}\text{\textnormal{Homeo}}}

\newcommand{\BdD}{\mathrm{B}\text{\textnormal{Homeo}}^{\delta}}
\newcommand{\dD}{\text{\textnormal{Homeo}}^{\delta}}

\newcommand{\rotcong}{\rotatebox[origin=c]{90}{$\cong$}}
%
\makeatother
\setcounter{secnumdepth}{4} 
\setcounter{tocdepth}{4}    

\addtocontents{toc}{\protect\setcounter{tocdepth}{1}}
\setlength\emergencystretch\textwidth
\makeatletter
\let\c@equation\c@thm
\makeatother
\numberwithin{equation}{section}
\bibliographystyle{plain}
\title[]{On the finiteness of the classifying space of diffeomorphisms of reducible three manifolds}

\author{Sam Nariman}
\address{Department of Mathematics\\
  Purdue University\\
150 N. University Street\\
West Lafayette, IN 47907-2067\\
}
\email{snariman@purdue.edu}

\begin{document}
\begin{abstract}
 Kontsevich (\cite[Problem 3.48]{kirby1995problems}) conjectured that $\BDiff(M, \text{rel }\partial)$ has the homotopy type of a finite CW complex for all compact $3$-manifolds with non-empty boundary. Hatcher-McCullough (\cite{MR1486644}) proved this conjecture when $M$ is irreducible. We prove a homological version of Kontsevich's conjecture. More precisely, we show that  $\BDiff(M, \text{rel }\partial)$ has finitely many nonzero homology groups each finitely generated when $M$ is a connected sum of irreducible $3$-manifolds that each have a nontrivial and non-spherical boundary.
\end{abstract}
\maketitle
\section{Introduction}

For a closed surface $\Sigma_g$ of genus $g>1$, it is well-known that the classifying space $\BDiff(\Sigma_g)$ is rationally equivalent to $\mathcal{M}_g$, the moduli space of Riemann surfaces of genus $g$. Therefore, in particular, the rational homology groups of $\BDiff(\Sigma_g)$ vanish above a certain degree, and in fact, more precisely they vanish above degree $4g-5$, which is the virtual cohomological dimension of the mapping class group $\text{Mod}(\Sigma_g)$. For a surface $\Sigma_{g,k}$ with $k>0$ boundary components, the classifying space $\BDiff(\Sigma_{g,k}, \text{rel }\partial)$ is in fact homotopy equivalent to the corresponding moduli space of Riemann surfaces of genus $g$ with $k$ boundary components. Therefore, $\BDiff(\Sigma_{g,k}, \text{rel }\partial)$ has the homotopy type of a finite-dimensional CW-complex. 

Similarly, Kontsevich (\cite[Problem 3.48]{kirby1995problems}) conjectured for compact $3$-manifold $M$ with  non-empty boundary, the classifying space $\BDiff(M, \text{rel }\partial)$ has a finite-dimensional model. This conjecture is known to hold for irreducible $3$-manifolds with non-empty boundary (\cite{MR1486644}). In this paper, we shall prove the homological finiteness of these classifying spaces for reducible $ 3$-manifolds with a condition on their boundary. 

Throughout this paper, for brevity, we write $\Diff(M,\text{rel }\partial)$ and $\tH(M, \text{rel }\partial) $ to denote the smooth orientation preserving diffeomorphisms and orientation preserving homeomorphisms respectively whose supports (i.e. the closure of points that are not fixed) are away from the boundary $\partial M$ so they are the identity near the boundary. In general, when we use $\text{rel }X$ in the diffeomorphism group, for some $X\subset M$, we mean those diffeomorphisms or homeomorphisms whose supports are away from $X$.

We say a path-connected space $K$ is   {\it strongly homologically finite} if for all $\bZ[\pi_1(K)]$-modules $A$ that are finitely generated as an abelian group,
$H_*(K; A)$ is finitely generated in each degree and is non-zero in finitely many degrees.
\begin{thm}\label{Kont1}Let $M$ be an orientable $3$-manifold that is a connected sum of compact irreducible $3$-manifolds that are not diffeomorphic to the $3$-ball and each have a nontrivial boundary.
 Then the classifying space $\BDiff(M, \text{rel }\partial)$ is strongly homologically finite.
 \end{thm} 
 In the irreducible case, the homotopy type of the group $\Diff(M)$ is very well studied.  When $M$ admits one of Thurston's geometries, there has been an encompassing program known as the generalized Smale's conjecture that relates the homotopy type of $\Diff(M)$ to the isometry group of the corresponding geometry (for more details and history, see the discussions in Problem 3.47 in \cite{kirby1995problems} and Sections 1.2 and 1.3 in \cite{MR2976322}). For $\bS^3$, it was proved by Hatcher (\cite{hatcher1983proof}), and for Haken $3$-manifolds, it is a consequence of Hatcher's work and also understanding the space of incompressible surfaces (\cite{MR224099, MR0420620, MR0448370}) inside such manifolds. Recently Bamler and Kleiner (\cite{bamler2019ricci, bamler2019diffeomorphisms}) used Ricci flow techniques to settle the generalized Smale's conjecture for all $3$-manifolds admitting the spherical geometry or in the Nil geometry.   Hence, this recent body of work using Ricci flow techniques addresses all cases of the generalized Smale's conjecture.
 
 Recall that a compact $3$-manifold $M$ (with or without boundary) is called prime if the existence of a diffeomorphism between $M$ and the connected sum $M_1\# M_2$ of two compact $3$-manifolds $M_1$ and $M_2$, implies that at least one of them is diffeomorphic to the $3$-sphere. The prime decomposition theorem says that every compact $ 3$ manifold  is diffeomorphic to the connected sum of prime manifolds.  A prime closed $3$-manifold is either diffeomorphic to $\bS^1\times \bS^2$ or it is irreducible (i.e. every embedded $\bS^2$ bounds a ball). On the other hand, geometric manifolds are the building blocks for irreducible manifolds. Given the generalized Smale's conjecture, we have a good understanding of the homotopy type of the diffeomorphism groups for these atomic pieces. The JSJ and geometric decomposition theorems (see \cite[Chapter 2, section 6]{neumann1996notes} for the statement of these theorems) give a way to cut an irreducible manifold along embedded tori into these building blocks. If the JSJ decomposition is non-trivial for an irreducible manifold, then it will be Haken whose diffeomorphism groups are well studied. Hence, given that we also know the homotopy type of the diffeomorphism group of $\bS^1\times \bS^2$ by Hatcher's theorem (\cite{hatcher1981diffeomorphism}), we have a good understanding of the homotopy type of diffeomorphism group of prime manifolds. In the reducible case, the prime decomposition theorem cuts the manifold along separating spheres into its prime factors. The difficulty, however, in understanding the reducible case is to relate the diffeomorphism group of a reducible manifold to the diffeomorphisms of its prime factors. 

 C\' esar de S\' a and Rourke (\cite{MR513752}) proposed to describe the homotopy type of $\Diff(M)$ in terms of the homotopy type of diffeomorphisms of the prime factors and an extra factor of the loop space on ``the space of prime decompositions''. Hendriks-Laudenbach (\cite{MR780734}) and  Hendriks-McCullough (\cite{MR893801}) found a  model for this extra factor. Later Hatcher, in an interesting unpublished note, proposed a {\it finite} dimensional model for this ``space of prime decompositions'' and more interestingly, he proposed that there should be a map between $\BDiff(M)$ and the product of classifying spaces of diffeomorphisms of all prime factors of $M$. He envisaged that this map sits in a fiber sequence whose fiber is ``the space of prime decompositions''.
 
 Hatcher's approach, if completed, would also solve Kontsevich's conjecture in the special case of reducible $ 3$-manifolds such that all the irreducible factors have non-empty boundaries. So our result is {\it the homological version} of what Hatcher intended to prove about Kontsevich's conjecture. However, instead of trying to build the map, we take the geometric group theory approach by letting the abstract group of diffeomorphisms act on a ``huge" simplicial complex inspired by the techniques that  Kathryn Mann and the author (\cite{mann2020dynamical}) used to study the second homology of $\BDiff(M)$.

For technical simplicity, we work with the homeomorphism groups instead of diffeomorphism groups. This does not make a difference to the main result. The reason is that Cerf (\cite{cerf1961topologie}) assumed Smale's conjecture which was later proved by Hatcher (\cite{hatcher1983proof}) to show that in these low dimensions, the inclusion $\Diff(M)\hookrightarrow \Homeo(M)$ is, in fact, a weak homotopy equivalence. 

On the other hand, in all dimensions, by Mather-Thurston's theorem (\cite[Corollary (b) of theorem 5]{thurston1974foliations} and see \cite[Section 2, Theorem 2.5]{mcduff1980homology} for the proof) for homeomorphisms, we have the natural map 
\begin{equation}\label{Thurston}
\BdD(M)\to \BD(M),
\end{equation}
which is an acyclic map and in particular it induces a homology isomorphism in all degrees. The same statement also holds for manifolds with boundary and in the relative case in particular relative to the boundary when it is non-empty (see \cite[Section 2, Theorem 2.5]{mcduff1980homology}).

 Hence to prove the main theorem, we use a homological approach where we consider the action of $\dD(M, \text{rel }\partial)$ on a simplicial complex $\mathcal{S}(M)$ given by {\it the complex of essential spheres}, to give a model for $\BdD(M, \text{rel }\partial)$ suitable for an inductive argument to prove the main theorem. 
 
The revision process of this paper, which first appeared on the arXiv in 2021, was prolonged due to the author's engagement in other research projects, as well as periods of neglect and lack of motivation. In the meantime, Boyd, Bregman, and Steinebrunner have obtained a complete resolution of the problem in full generality (\cite{boyd2024moduli}). Nevertheless, the author hopes that the homological approach presented in this shorter paper remains of independent interest.


\subsection*{Acknowledgment}The author was partially supported by NSF grants DMS-2113828 and CAREER Grant DMS-2239106, a grant from the Simons Foundation (41000919, SN), and the European Research Council (ERC) under the European Union’s Horizon 2020 research and innovation program (grant agreement No. 682922). He thanks Sander Kupers and Andrea Bianchi for their helpful comments on an earlier draft of this paper. The author is also very grateful to the referee for numerous valuable suggestions and constructive criticism that significantly improved the clarity and accuracy of the manuscript. Despite the author's delay over the years in revising the paper, the referee's exceptional dedication and thoughtful feedback were both helpful and encouraging in bringing the rough first draft to its published form.

\section{Sphere complexes} In this section, we assume that $M$ is a compact reducible $3$-manifold with a non-empty boundary. Additionally, in this section, we assume that we do not have spherical boundary components in order to have a prime decomposition with no $3$-disk factor  (\cite[Chapter 3, Lemma 3.7]{MR0415619}). To study the homological finiteness of $\BdD(M, \text{rel }\partial)$ inductively based on the number of prime factors in the prime decomposition of $M$, we shall first construct a  simplicial complex $\mathcal{S}(M)$ on which $\dD(M, \text{rel }\partial)$ acts simplicially.

\begin{defn}Let $\mathcal{S}(M)$ be a simplicial complex whose vertices are given by locally flat embeddings $\phi\colon \bS^2\hookrightarrow M$ whose images are essential spheres i.e. $\phi$ is not null-homotopic  and simplices in $\mathcal{S}(M)$ are given by collections of locally flat embeddings whose images are disjoint.
\end{defn}
\begin{rem}
For $3$-manifolds that do not have $\bS^1\times \bS^2$ factors and have no spherical boundary components, essential spheres are the same as separating spheres.
\end{rem}
\begin{prop}\label{spherecomplex}
The simplicial complex  $\mathcal{S}(M)$ is contractible.
\end{prop}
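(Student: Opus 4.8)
The plan is to prove contractibility by Hatcher's surgery technique for complexes of spheres, carried out in the parametrized setting. First note that a reducible $3$-manifold with non-empty boundary contains an embedded essential $2$-sphere that is not boundary-parallel (e.g.\ a connected-sum sphere of a non-trivial prime decomposition, or, if there is an $S^1\times S^2$ summand, a non-separating sphere, which can never be boundary-parallel), so $\mathcal{S}(M)\neq\emptyset$. Since $|\mathcal{S}(M)|$ is a CW complex, it is enough to know it is path-connected and that $\pi_k|\mathcal{S}(M)|=0$ for $k\geq 1$; both follow from the claim that any map $g\colon S^k\to|\mathcal{S}(M)|$ with $k\geq 0$ is null-homotopic. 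Given such a $g$, simplicial approximation makes $g$ homotopic to a simplicial map out of a finite triangulation $L$ of $S^k$, involving only finitely many embedded spheres $S_1,\dots,S_N$, the images of the vertices of $L$. Having fixed $g$, we are still free to choose the embedding of a reference sphere $\Sigma$ within its isotopy class, and by general position we take $\Sigma$ transverse to $S_1,\dots,S_N$. Writing $v_\Sigma$ for the corresponding vertex, the goal is to homotope $g$ into the closed star $\overline{\mathrm{star}}(v_\Sigma)$, which is a cone and hence contractible.

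The argument uses two moves. The \emph{isotopy move}: if $S,S'$ are isotopic essential, non-boundary-parallel spheres, then $v_S$ and $v_{S'}$ can be joined by an edge-path in $\mathcal{S}(M)$, because along a sufficiently fine subdivision of the isotopy each sphere lies inside a product neighbourhood of the preceding one, so a suitable parallel copy is disjoint from both; more generally, an isotopy of one of the spheres in the image of $g$ which is disjoint from the other spheres sharing a simplex with it in $L$ realizes a homotopy of $g$. The \emph{surgery move}: pick a circle of $\bigl(\bigcup_j S_j\bigr)\cap\Sigma$ innermost on $\Sigma$, bounding a disk $D\subset\Sigma$ whose interior misses every $S_j$; it lies on a single sphere $S_w$, and surgering $S_w$ along $D$ produces two disjoint spheres $S',S''$. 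Because $\mathrm{int}(D)$ misses all $S_j$ while $\partial D\subset S_w$, the disk $D$, and hence a thin neighbourhood of $S_w\cup D$, is disjoint from every $S_j$ that shares a simplex with $v_{S_w}$ in $L$; consequently one of $S',S''$ is disjoint from all those $S_j$ and — unless both $S',S''$ are inessential or boundary-parallel — may be chosen essential and not boundary-parallel. Calling it $\widehat{S}$, the simplicial map $g'$ obtained by replacing the vertex $v_{S_w}$ throughout by $v_{\widehat{S}}$ is well defined and contiguous to $g$ (for each simplex $\sigma$ of $L$, $g(\sigma)$ and $g'(\sigma)$ span a common simplex of $\mathcal{S}(M)$), hence homotopic to $g$; and $|\widehat{S}\cap\Sigma|<|S_w\cap\Sigma|$, so $\sum_j|S_j\cap\Sigma|$ strictly decreases. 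In the remaining degenerate case the standard innermost-disk analysis shows $S_w$ can be isotoped across a ball — a ball disjoint from every $S_j$ sharing a simplex with $v_{S_w}$, so the isotopy respects $L$ — to lower $|S_w\cap\Sigma|$, which we install via the isotopy move before restoring transversality.

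Iterating, after finitely many moves $g$ is homotopic to a map all of whose image spheres are disjoint from $\Sigma$; adjoining $v_\Sigma$ to every simplex of the image then places the map in $\overline{\mathrm{star}}(v_\Sigma)$, so $g$ is null-homotopic, and $|\mathcal{S}(M)|$ is contractible. The step I expect to be the real obstacle is the bookkeeping that keeps both moves legitimate over the \emph{whole} parameter complex $L$ throughout the induction on $\sum_j|S_j\cap\Sigma|$ — especially reconciling the isotopy move, which the parametrization of the vertices forces upon us (unlike in Hatcher's sphere complex of isotopy classes), with the disjointness data recorded by $L$, and checking that the degenerate-case isotopies are compatible with that data. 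One clean way to absorb some of this would be to prove separately that forgetting the parametrization is a weak homotopy equivalence from $\mathcal{S}(M)$ onto the sphere complex of isotopy classes and to invoke the classical contractibility of the latter; the argument above is, in effect, a parametrized repackaging of that proof.
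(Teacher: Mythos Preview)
Your proposal is correct and follows essentially the same approach as the paper: both use Hatcher's innermost-disk surgery on a reference sphere $\Sigma$ (the paper's $v$) to reduce the total number of intersection circles, replacing one vertex of the simplicial map by a surgered sphere and observing that the replacement is compatible with the star because the innermost disk is disjoint from all neighbouring spheres. The paper streamlines your argument by noting that at least one of the two surgered pieces is automatically essential (since $S_w$ is), so your separate ``degenerate case'' and isotopy move are not needed; conversely, the paper handles the possibility that circles from different $S_j$ intersect on $\Sigma$ by first passing to a maximal disjoint family of circles, a small point you should make explicit when you say ``innermost on $\Sigma$''.
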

\begin{proof}
In \cite[Lemma 4.3]{nariman2020local}, the author proved that the subcomplex of separating spheres in $M$ is contractible. If $M$ does not have $\bS^1\times \bS^2$ summands, which is the case that in fact, we are interested in, the complex of essential spheres $\mathcal{S}(M)$ is the same as the complex in \cite[Lemma 4.3]{nariman2020local}.  But the same proof shows that the complex $\mathcal{S}(M)$ is also contractible even when the prime decomposition of $M$ has  $\bS^1\times \bS^2$ summands. 
\end{proof} 


Note that the group $\dD(M, \text{rel }\partial)$ acts on $\mathcal{S}(M)$ simplicially. The complex $\mathcal{S}(M)$ is contractible by \Cref{spherecomplex}. Therefore, the homotopy quotient $\mathcal{S}(M)\hcoker \dD(M, \text{rel }\partial)$ is homotopy equivalent to $\BdD(M, \text{rel }\partial)$. The stabilizer of each simplex in $\mathcal{S}(M)$ is the subgroup of $\dD(M, \text{rel }\partial)$ that fixes a set of essential spheres pointwise so it is isomorphic to the homeomorphism group of a $3$-manifold whose connected components have fewer prime factors. But one issue is that $\mathcal{S}(M)$ has simplices of arbitrary large dimensions since we allow parallel spheres. To account for this infinite dimensionality, we use the simplicial complex that Hatcher and McCullough defined in \cite[Section 1]{hatcher1990finite}.
\begin{defn}
Let $[\mathcal{S}](M)$ be the simplicial complex whose vertices are the isotopy classes of essential embedded spheres in $M$. A set of vertices $\{[S_0], [S_1],\dots, [S_n]\}$ constitutes an $n$-simplex if there are pairwise disjoint embedded spheres $S_i'$ in $M$ such that for each $i$, the isotopy class of the sphere $S_i'$ is the class $[S_i]$. 
\end{defn} 
The mapping class group $\text{Mod}(M, \text{rel }\partial)=\pi_0(\tH(M, \text{rel }\partial))$ acts on $[\mathcal{S}](M)$ simplicially. Because all prime factors have non-trivial boundary components that are fixed by $\text{Mod}(M, \text{rel }\partial)$, as we shall see in \Cref{order} if the action of $\text{Mod}(M, \text{rel }\partial)$ fixes a simplex set-wise, it also fixes it pointwise. The complex $[\mathcal{S}](M)$ is finite-dimensional and also by Hatcher-McCullough's theorem (\cite[Proposition 2.2]{hatcher1990finite}) the set of the orbits of the action of $\text{Mod}(M, \text{rel }\partial)$ on simplices is also finite. 

To briefly recall why this is the case, they use a theorem of Scharlemann (see \cite[Appendix A, Lemma A.1]{Bonahon}) to find a ``normal" representative of each orbit.
Let the prime decomposition of $M$ be given by $P_1\# P_2\#\cdots \#P_r\#(\#^{g}\bS^1\times \bS^2)$ where $g$ summands are homeomorphic to $\bS^1\times \bS^2$.  Let $B$ be a punctured $3$-cell having ordered $r+2g$ boundary components so that $M$ is obtained by gluing $P_i\backslash \text{int}(\bD^3)$ to $i$-th sphere boundaries for $1\leq i\leq r$ and $g$ copies of $\bS^2\times [0,1]$ are glued along the remaining $2g$ boundary components (see \cite[Appendix A, Lemma A.1]{Bonahon} for more details). 
 \begin{figure}[h]
\[
\begin{tikzpicture}[scale=.2]

\draw[line width=1.05pt]  (5,0) arc (0:45: 5);
\draw[line width=1.05pt]  (1.29,4.83) arc (75:105: 5);

\draw[line width=1.05pt]  (5,0) arc (0:-108: 5);
\draw[line width=1.05pt]  (-5,0) arc (180:225: 5);
\draw[line width=1.05pt]  (-5,0) arc (180:135: 5);

\draw[line width=1.05pt] [dashed] (5,0) arc (10: 170: 5 and 1);
\draw[line width=1.05pt]  (5,0) arc (-10: -170: 5 and 1);

\draw[line width=1.05pt] (3,2.5).. controls (2,2) and (1.5, 2.5).. (1,3.2);
\draw[line width=1.05pt] [dashed] (3,2.5).. controls (2,3.2) and (1.5, 3.8).. (1,3.2);

\draw[line width=1.05pt] (-3,2.5).. controls (-2,2) and (-1.5, 2.5).. (-1,3.2);
\draw[line width=1.05pt] [dashed] (-3,2.5).. controls (-2,3.2) and (-1.5, 3.8).. (-1,3.2);

\draw[line width=1.05pt]  (-3,-2.5).. controls (-2,-2) and (-1.5, -2.5).. (-1,-3.2);
\draw[line width=1.05pt] [dashed] (-3,-2.5).. controls (-2,-3.2) and (-1.5, -3.8).. (-1,-3.2);

\draw [line width=1.05pt] (3,2.5) .. controls (7,10) and (4,12)  .. (1,3.2);
\draw [line width=1.05pt] (-3,2.5) .. controls (-7,10) and (-4,12)  .. (-1,3.2);
\draw [line width=1.05pt] (-3,-2.5) .. controls (-7,-10) and (-4,-12)  .. (-1,-3.2);

\begin{scope}[shift={(-6.8,7)}, scale=0.5]
\node at (6.4,-0.8) {{\Small$P_1$}};

\end{scope}

\begin{scope}[shift={(0.3,6)}, scale=0.5]
\node at (5.9,0.5) {{\Small$P_2$}};

\end{scope}

\begin{scope}[shift={(-6.8,-7)}, scale=0.5]
\node at (5.9,0.5) {{\Small$P_3$}};
\end{scope}

\node at (2,1.5) {$$};
\node at (-2,1.5) {$$};
\node at (-2,-1.5) {$$};
\node at (2,-2.5) {$B$};
\end{tikzpicture}
\]
\caption{$\sigma$ here is a $2$-simplex consisting of 3 separating spheres that are drawn in one dimension lower. }\label{sc}
\end{figure}
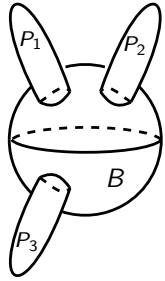

 \begin{lem}[Scharlemann]\label{Sch}
 For any simplex $\sigma \subset \mathcal{S}(M)$, there is a homeomorphism $f$ such that $f(\sigma)\subset B$. 
 \end{lem} 

Now as Hatcher and McCullough observed in \cite[Proposition 2.2]{hatcher1990finite}, there are finitely many isotopy classes of essential spheres in $B$ since they are determined by the way they partition the boundary components of $B$. This observation implies the finiteness of the orbits of the action of $\text{Mod}(M, \text{rel }\partial)$ on simplices of $[\mathcal{S}](M)$. 

 
The skeletal filtration on $[\mathcal{S}](M)$ induces a filtration on the quotient space 
\begin{equation}\label{filt}
\mathcal{F}_0\subset  \mathcal{F}_1\subset \dots \subset \mathcal{F}_n=[\mathcal{S}](M)/\text{Mod}(M, \text{rel }\partial),
\end{equation}
and by Hatcher and McCullough's observation, the filtration quotients are given by the wedge of a {\it finite} number of spheres. The reason that the filtration quotients are spheres is the fact that if the action fixes a simplex set-wise, then it fixes it pointwise. Let $\mathcal{O}_p$ be the set of orbits of the action of $\text{Mod}(M, \text{rel }\partial)$ on $p$-simplices of $[\mathcal{S}](M)$. So $\mathcal{F}_p-\mathcal{F}_{p-1}$ is homeomorphic to $\coprod_{\sigma\in \mathcal{O}_p} \dot{\Delta}^p_{\sigma}$, the disjoint union of open $p$-simplices indexed by $\mathcal{O}_p$. 

The natural simplicial map $\mathcal{S}(M)\to [\mathcal{S}](M)$ that sends spheres to their isotopy classes is equivariant with respect to the map $\dD(M, \text{rel }\partial)\to \text{Mod}(M, \text{rel }\partial)$. So we have a map $\mathcal{S}(M)/\dD(M, \text{rel }\partial)\to [\mathcal{S}](M)/\text{Mod}(M, \text{rel }\partial)$ which in turn induces a map
\[
\eta\colon \mathcal{S}(M)\hcoker \dD(M, \text{rel }\partial)\to [\mathcal{S}](M)/\text{Mod}(M, \text{rel }\partial).
\]
\begin{defn}
Let $\mathcal{L}$ be a local coefficient system, on a space $X$. We say $X$ is $\mathcal{L}$-{\it homologically finite}, if $H_*(X; \mathcal{L})$ is finitely generated in each degree and is non-zero in finitely many degrees.
\end{defn}

We are interested in local coefficient systems that are pullbacks of local coefficients systems on $\BD(M, \text{rel }\partial)$. Since we have the maps
\[
\mathcal{S}(M)\hcoker \dD(M, \text{rel }\partial)\to \mathrm{B}\dD(M, \text{rel }\partial)\to \BD(M, \text{rel }\partial),
\]
for each point $x$ in the image of $\eta$ we have a map $\eta^{-1}(x)\to  \BD(M, \text{rel }\partial)$ that is uniquely determined up to homotopy. It turns out, as we shall see, that the restriction of $\eta$ to each open simplex $\dot{\Delta}^p_{\sigma}$ is a fiber bundle (see \Cref{last}) whose fiber, by an inductive argument, is $\mathcal{L}$-homologically finite for all $\mathcal{L}$ that are pullbacks from local coefficient systems on $\BD(M, \text{rel }\partial)$.
\begin{defn}
Given a map $f\colon X\to \BD(M, \text{rel }\partial)$, we say that $(X,f)$ is {\it weakly homologically finite}, if $X$ is $\mathcal{L}$-homologically finite for all $\mathcal{L}$ that are pullbacks from local coefficient systems on $\BD(M, \text{rel }\partial)$ and are finitely generated abeliean groups at each point. In cases where we consider a space $X$, the homotopy class of the map $f$ is understood from the context, in which case we drop the map and say $X$ is weak homologically finite.
\end{defn}
%
\begin{thm}\label{main'}
The preimage $\eta^{-1}(x)$ is weak homologically finite  for all $x\in [\mathcal{S}](M)/\text{Mod}(M, \text{rel }\partial)$.
\end{thm}

Then the weak homological finiteness of $\BdD(M, \text{rel }\partial)$ will follow from a general statement about simplicial complexes where in this generality, to clarify the original argument, was suggested to the author by the referee.

\begin{lem}\label{mainlem} Let $G$ be a discrete group, and let $X$ and $Y$ be two $G$-simplicial complexes. Let $X\to Y$ be a $G$-equivariant map of simplicial complexes. This map induces the map
$\eta\colon X\hcoker G \to Y/G$. Let $\mathcal{L}$ be a local coefficient system on $X\hcoker G$. Suppose the following conditions hold
\begin{enumerate}
\item if $G$ fixes a simplex as a subset in $X$ or $Y$ then it fixes it pointwise,
\item $\eta^{-1}(y)$ is $\mathcal{L}_y$-homologically finite  for all $y \in Y/G$ where $\mathcal{L}_y$ is the pullback of $\mathcal{L}$,
\item There are finitely many orbits of the action of $G$ on simplices of $Y$ and as a result $Y/G$ is a finite CW complex,
\end{enumerate}
then $X\hcoker G$ is also $\mathcal{L}$-homologically finite.
\end{lem}
The simplicial map $ \mathcal{S}(M)\to [\mathcal{S}](M)$ is $\dD(M, \text{rel }\partial)$-equivariant and we already know that the conditions (1) and (3) are satisfied. So \Cref{Kont1} follows from \Cref{mainlem} and \Cref{main'}. The bulk of the work is to prove \Cref{main'} and we shall prove \Cref{mainlem} in \Cref{last}.

In the \Cref{fiber} and \Cref{barconstruction}, we find a model for $\eta^{-1}(x)$ to which we can apply the induction hypothesis (i.e.~the strong homological finiteness of $\BD(M, \text{rel }\partial)$ for $M$ with fewer prime factors) when $M$ is connected sum of irreducible factors that each have a non-trivial boundary.

 \section{Reformulation of the main theorem and an inductive strategy}
 In this section, we shall use the hypothesis that the prime decomposition of $M$ consists of irreducible factors that each have non-empty non-spherical boundary components. We choose a base point on the boundary of $M$. We denote this boundary component by $\partial_*M$. The goal is for each $p$ and each $x\in\mathcal{F}_p-\mathcal{F}_{p-1}$ to find a semi-simplicial space $X_{\bullet}$ whose realization admits an acyclic map from $\eta^{-1}(x)$ and sits in a fibration sequence so that by induction on the number of prime factors we could argue that the fiber and the base are weakly homologically finite with compatible maps. 
 
 The advantage of working with $ 3$-manifolds that are connected sums of irreducible pieces such that each have a non-empty boundary is: 
 \begin{itemize}
\item When we cut along essential separating spheres, the remaining pieces each have a non-spherical boundary component that is fixed and we shall use this for the inductive argument.
\item Since each irreducible factor has a non-trivial boundary that is fixed, homeomorphic irreducible factors cannot be permuted under the action of $\tH(M, \text{rel }\partial)$.
 \end{itemize}
 
 To be able to induct on the number of prime factors of $M$, we shall prove a slightly more general statement of \Cref{Kont1} taking into account spherical boundary components. 
 
Recall that the corresponding automorphism groups in $C^0$-category and $C^{\infty}$-category are weakly homotopy equivalent.
 \begin{thm}[Cerf and Hatcher]\label{CH}
 For a compact $3$ manifold $M$, the inclusion $\Diff(M)\to \tH(M)$ is a weak homotopy equivalence.
 \end{thm}
Cerf (\cite{cerf1961topologie}) assumed Smale's conjecture which was later proved by Hatcher (\cite{hatcher1983proof}) to show that in these low dimensions, the inclusion $\Diff(M)\hookrightarrow \Homeo(M)$ is a weak homotopy equivalence. So to prove our homological finiteness result, we freely use diffeomorphism groups and homeomorphism groups depending on the convenience of the situation. 
 
 Let $e_i\colon \bD^3\hookrightarrow M$ for $1\leq i\leq k+l$ be disjoint embeddings and let $N$ be the $3$-manifold obtained from $M$ by removing  $e_i(\text{int}(\bD^3))$ for all $1\leq i\leq k+l$. So the boundary of $N$ is the union of $\partial M$ with sphere boundary components, which we denote by $S_i$. We denote the union of the sphere boundary components $ \{S_i\}_{i=1}^k$ by $S_{\text{free}}$ and union of the rest of the sphere boundary components by $S_{\text{fixed}}$. Let $\tH(N, S_{\text{free}}, \text{rel }(\partial M\cup S_{\text{fixed}}))$ be the subgroup of $\tH(N, \text{rel }(\partial M\cup S_{\text{fixed}}))$ whose elements  fix each sphere in $S_{\text{free}}$ set-wise. 
 
 \begin{thm}\label{Kont}
 Then $\BD(N, S_{\text{free}}, \text{rel }(\partial M\cup S_{\text{fixed}}))$ is strongly homologically finite.
 \end{thm}
 
 \Cref{Kont1} is a special case of \Cref{Kont}. But as we shall see in \Cref{sbc}, in fact, they are equivalent statements. However, the statement of  \Cref{Kont} is more convenient for the inductive argument.

Our first goal is to use \Cref{Kont} inductively for fewer prime factors than the number of prime factors of $M$ to show that for each $p$ and each $x\in\mathcal{F}_p-\mathcal{F}_{p-1}$, the pre-image $\eta^{-1}(x)$ is weakly homologically finite. To fix ideas, let $x$ be in $\mathcal{F}_0$ in the filtration \ref{filt} which is the image of a separating sphere $S\subset M$. This is because all vertices in $\mathcal{S}(M)$ are separating since the prime decomposition of $M$ does not have $\bS^1\times \bS^2$ summands.  
 
 Suppose that the sphere $S$ cuts the manifold $M$ into two pieces $M_1$ and $M_2$ where $M_1$ contains $\partial_* M$, the boundary component of $M$ with the base point. Let $\tH(M_1, S, \text{rel }\partial M)$ be the subgroup of $\tH(M_1)$ that fixes the boundary component $S$ set-wise and the rest of the boundary components pointwise. In \Cref{fiber}, we shall prove that there is an acyclic map from $\eta^{-1}(x)$  to the realization of a semi-simplicial space (in fact a two-sided bar construction) $X_{\bullet}$ that fits in a homotopy fiber sequence
\begin{equation}\label{fib1}
\BD(M_2, \text{rel }\partial)\to ||X_{\bullet}||\to \BD(M_1, S, \text{rel }\partial M).
\end{equation}
Note that $M_1$ and $M_2$ have sphere boundary components. When we fill in those sphere boundary components with balls, they have fewer prime factors compared to $M$. By the induction hypothesis, \Cref{Kont} implies that the fiber $\BD(M_2, \text{rel }\partial)$ and the base $\BD(M_1, S, \text{rel }\partial M)$ are strongly homologically finite. Then the following lemma implies that $||X_{\bullet}||$ is also strongly homologically finite.
\begin{lem}\label{kupersfinitenesslemma} Let $F\xrightarrow{i} E\to B$ be a fiber sequence where $B$ is path-connect.  If $B$ is strongly homologically finite, $E$ is path-connected and $H_*(F;
i^*(M))$ is finitely
generated  in each degree and nonzero in finitely many degrees for all $\bZ[\pi_1(E)]$-modules $M$ that are finitely generated 
as abelian groups, then $E$ is strongly homologically finite.
\end{lem}
\begin{proof}
The proof is the same as the proof of \cite[Lemma 2.5 (ii)]{kupers2019some}. Kupers' definition of being homologically finite only requires homology with finitely generated abelian local coefficients to be finitely generated in each degree. We further require that there are only finitely many non-zero homology groups with such local coefficients. But the same proof in \cite[Lemma 2.5 (ii)]{kupers2019some} works verbatim for our definition too.
\end{proof}
It will be by construction that the map $\eta^{-1}(x)\to \BD(M, \text{rel }\partial)$ factors through the map $\eta^{-1}(x)\to ||X_{\bullet}||$. Therefore, in this way, we shall deduce that $\eta^{-1}(x)$ is a weakly homologically finite space by induction on the number of prime factors and then \Cref{Kont} will follow from \Cref{mainlem}.  First, let us settle the induction's base case for \Cref{Kont}.

  \section{The base case of induction and spherical boundary components}\label{sbc}
In this section, we shall see that \Cref{Kont1} and \Cref{Kont} are equivalent and we prove \Cref{R} that is stronger than the base case of \Cref{Kont}.

 Let $P$ be an irreducible $ 3$-manifold with a non-empty and non-spherical boundary. Let $e_i\colon \bD^3\hookrightarrow P$ for $1\leq i\leq k+l$ be disjoint embeddings and let $N$ be the $3$-manifold obtained from $P$ by removing  $e_i(\text{int}(\bD^3))$ for all $1\leq i\leq k+l$. So the boundary of $N$ is the union of $\partial P$ with sphere boundary components, which we denote by $S_i$. We denote the union of the sphere boundary components $ \{S_i\}_{i=1}^k$ by $S_{\text{free}}$ and union of the rest of the sphere boundary components by $S_{\text{fixed}}$. The group $\tH(N, \text{rel }(\partial P\cup S_{\text{fixed}}))$  fixes the boundary components $S_{\text{free}}$ set-wise. To keep track of free spherical boundary components, in what follows, we also add $S_{\text{free}}$ to the notation $\tH(N, \text{rel }(\partial P\cup S_{\text{fixed}}))$ (e.g.  $\tH(N, S_{\text{free}}, \text{rel }(\partial P\cup S_{\text{fixed}}))$ without having ``rel'' before the free boundary components).
 
 \begin{thm}\label{R} Then $\BD(N, S_{\text{free}}, \text{rel }(\partial P\cup S_{\text{fixed}}))$ has a finite CW complex model. 
 \end{thm}
This of course implies the base case of \Cref{Kont}. Since the corresponding diffeomorphism groups and homeomorphism groups are weakly equivalent (by \Cref{CH}), we shall instead prove that $\BDiff(N, S_{\text{free}}, \text{rel }\partial P\cup S_{\text{fixed}})$ has a finite CW complex model. 
 
 We already know the homotopical finiteness for an irreducible $3$-manifold with a non-empty boundary (\cite{MR1486644}).
\begin{thm}[Hatcher-McCullough]\label{HM} If $M$ is an irreducible $3$-manifold with a non-empty boundary, then $\BDiff(M,\text{rel }\partial)$ has the homotopy type of a finite CW-complex.
\end{thm}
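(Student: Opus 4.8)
The plan is to prove this by reducing $\BDiff(M,\text{rel }\partial)$ to the classifying space of a mapping class group and then constructing a finite model for that space by induction along a Haken hierarchy, following Hatcher--McCullough \cite{MR1486644}. First I would dispose of the case $M=D^3$, where Hatcher's solution of the Smale conjecture gives $\Diff(D^3,\text{rel }\partial)\simeq *$ and hence $\BDiff(D^3,\text{rel }\partial)\simeq *$. In every other case a compact orientable irreducible $3$-manifold with non-empty boundary is Haken, so I would invoke the Hatcher--Ivanov theorem that the components of $\Diff(M,\text{rel }\partial)$ are contractible; then $\BDiff(M,\text{rel }\partial)$ is a $K(\pi,1)$ with $\pi=\pi_0\Diff(M,\text{rel }\partial)=\mathrm{Mod}(M,\text{rel }\partial)$, and the task becomes to produce a finite $K(\mathrm{Mod}(M,\text{rel }\partial),1)$.

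Next I would record that $\mathrm{Mod}(M,\text{rel }\partial)$ is torsion-free: by the realization of periodic maps on Haken manifolds, a non-trivial finite-order mapping class is represented by a non-trivial finite-order diffeomorphism $\phi$ fixing $\partial M$ pointwise; as $\phi$ is orientation-preserving and acts trivially on a collar of $\partial M$, Bochner linearization shows that its fixed set is open, so $\phi=\mathrm{id}$ by connectedness of $M$. This is where fixing --- rather than merely preserving --- the boundary is essential: it removes exactly the torsion that would obstruct a finite classifying space, e.g.\ the torsion coming from isometries of a closed hyperbolic geometric piece.

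For the main construction I would prove the more general assertion that $\BDiff(N,\text{rel }\Gamma)$ has the homotopy type of a finite CW complex whenever $N$ is Haken and $\Gamma$ is a non-empty boundary pattern that the diffeomorphisms fix pointwise, arguing by induction on the length of a Haken hierarchy for $(N,\Gamma)$. In the inductive step I would choose a two-sided incompressible, $\partial$-incompressible surface $S\subset N$ --- not boundary-parallel, compatible with $\Gamma$, and shortening the hierarchy --- and use Hatcher's theorem that the space of submanifolds of $N$ isotopic to $S$ is contractible: the orbit-map fibration then shows that the inclusion $\Diff(N,S,\text{rel }\Gamma)\hookrightarrow\Diff(N,\text{rel }\Gamma)$ of the setwise stabilizer of $S$ is a weak equivalence, so $\BDiff(N,\text{rel }\Gamma)\simeq\BDiff(N,S,\text{rel }\Gamma)$. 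Restricting to $S$ exhibits $\BDiff(N,S,\text{rel }\Gamma)$ as a fibration whose base is the classifying space of a mapping class group of $S$ relative to the boundary pattern induced on it (torsion-free and of finite CW type by surface theory) and whose fibre is $\BDiff(N|S,\text{rel }\Gamma')$, where $N|S$ is $N$ cut along $S$ and $\Gamma'$ adds the two new $S$-faces to $\Gamma$. Since $(N|S,\Gamma')$ has strictly shorter hierarchy, induction gives finiteness of the fibre, which (base and fibre being of finite CW type) passes to the total space. The base case of the induction is a disjoint union of balls carrying boundary patterns, and their relative diffeomorphism groups are handled directly from the Smale conjecture.

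I expect the main obstacle to be the bookkeeping of the boundary patterns through the hierarchy: one must check that the hierarchy terminates, that Hatcher's contractibility theorem and the cutting construction remain valid relative to a boundary pattern (in particular so that $\partial S$ can be incorporated into the pattern and the relevant mapping class group of $S$ stays torsion-free), and that the terminal and exceptional pieces --- $I$-bundles, small Seifert-fibered manifolds, handlebodies rel boundary, and balls with boundary patterns --- are genuinely of finite CW type and not merely of finite virtual cohomological dimension. This verification, in which the rel-boundary hypothesis repeatedly supplies the torsion-freeness needed for finiteness, is the technical heart of the argument.
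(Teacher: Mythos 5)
The paper itself gives no proof of this statement: \Cref{HM} is quoted from Hatcher--McCullough \cite{MR1486644} and used as a black box (in \Cref{R} and, via \Cref{frame}, in \Cref{K}), so the only meaningful comparison is with that source. Your outline has the right overall shape, and it is broadly the shape of \cite{MR1486644}: dispose of $D^3$ by the Smale conjecture, use the Hatcher--Ivanov theorem that the components of $\Diff(M,\text{rel }\partial)$ are contractible to reduce to producing a finite aspherical $K(\pi,1)$ for the rel-boundary mapping class group, and then induct over a decomposition of the Haken manifold organized by Johannson-style boundary patterns. (You also quietly add an orientability hypothesis that the statement above does not include.)

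Two of your steps, however, have genuine gaps. First, your torsion-freeness lemma rests on realizing a finite-order element of $\mathrm{Mod}(M,\text{rel }\partial)$ by a finite-order diffeomorphism that is the identity on $\partial M$; the available Nielsen-realization theorems for Haken manifolds (Zimmermann, Heil--Tollefson) realize finite subgroups of the \emph{absolute} mapping class group and give nothing rel boundary, so this step cannot be quoted. It is also unnecessary: once a finite aspherical model exists, torsion-freeness is automatic, so the lemma should be dropped rather than repaired. Second, and more seriously, in the inductive step the restriction map $\Diff(N,S,\text{rel }\Gamma)\to\Diff(S)$ is not surjective, so the base of your fibration is not a classifying space for the mapping class group of $S$ with its induced pattern but for the subgroup of isotopy classes that extend over $N$ compatibly with $\Gamma$; ``surface theory'' says nothing about that subgroup being torsion-free or of finite type, and identifying and controlling it is precisely what Johannson's machinery is for --- it is the main content of \cite{MR1486644}, not bookkeeping. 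In the same step you must arrange every cutting surface to have nonempty boundary meeting the already-fixed region (a closed cutting surface would feed the torsion of a closed-surface mapping class group into the base), and Hatcher's contractibility theorem for the space of surfaces isotopic to $S$ has exceptional fibered/semifibered cases (where that space is a circle) and needs a rel-pattern version before the orbit-map argument identifies $\BDiff(N,S,\text{rel }\Gamma)$ with $\BDiff(N,\text{rel }\Gamma)$. As written, then, the proposal is a sound road map whose hardest segments are left unbuilt.
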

So we know that $\BDiff(P, \text{rel }\partial)$ has a finite CW complex model. We want to inductively fix $e_i(\bD^3)$ either set-wise or pointwise and still get a finite CW complex model. 
\begin{lem}\label{frame} Suppose $P$ is a $3$-manifold with possibly nonempty boundary. Let $\partial_1$ be a subset of boundary components containing the non-spherical components (it could also contain spherical boundary components) and let $S_{\text{free}}$ be the union of remaining spherical components. Let $e\colon D^3\hookrightarrow P$ be an embedding of a ball inside $P$. If $\BDiff(P, S_{\text{free}}, \text{rel }\partial_1)$ has the homotopy type of a finite CW-complex, so does $\BDiff(P, S_{\text{free}}, \text{rel }\partial_1 \cup e(D^3))$. Similarly, if $\BDiff(P, S_{\text{free}}, \text{rel }\partial_1)$ is strongly homologically finite , so is $\BDiff(P, S_{\text{free}}, \text{rel }\partial_1 \cup e(D^3))$.
\end{lem}
\begin{proof} Including the diffeomorphism group that fixes a neighborhood of the boundary into the diffeomorphism group that fixes the boundary pointwise is a weak homotopy equivalence (see \cite[Chapter 4 and 5]{kupers2019lectures}). So let $\Diff(P, S_{\text{free}}, \text{fix }e(D^3), \text{rel }\partial_1)$ be the subgroup of $\Diff(P, S_{\text{free}}, \text{rel }\partial_1)$ that fixes $e(D^3)$ pointwise. The inclusion
\[
\Diff(P, S_{\text{free}}, \text{rel }\partial_1 \cup e(D^3))\to \Diff(P, S_{\text{free}}, \text{fix }e(D^3), \text{rel }\partial_1),
\]
is a weak homotopy equivalence. Hence,  using \cite[Theorem C]{MR0123338} and the above weak equivalence, we have a homotopy fiber sequence
\[
\Diff(P, S_{\text{free}}, \text{rel }\partial_1 \cup e(D^3))\to \Diff(P, S_{\text{free}}, \text{rel }\partial_1)\to \text{Emb}^+(D^3, P)\simeq \text{Fr}^+(P),
\]
where $\text{Emb}^+(D^3, P)$ is the space of orientation preserving embeddings and $\text{Fr}^+(P)$ is the oriented frame bundle of $M$. It can be delooped (similar to the standard fact \cite[Proposition 1.80]{felix2008algebraic}) to induce the fiber sequence
\[
\text{Fr}^+(P)\to \BDiff(P, S_{\text{free}}, \text{rel }\partial_1 \cup e(D^3))\to \BDiff(P, S_{\text{free}}, \text{rel }\partial_1).
\]
The base and the fiber of this fiber sequence have the homotopy type of a finite CW-complex (strongly homologically finite). Therefore, the total space also has a finite CW-complex model (strongly homologically finite).
\end{proof}
\begin{lem}\label{markpoint} Let $P$ be as in the previous lemma and $e\colon D^3\hookrightarrow P$ be an embedding of a ball inside $P$. Let $x\in P$ be the image of the center of the ball. Let $M$ be the manifold obtained from $P$ by removing $\text{int}(e(D^3))$ so it has a sphere boundary $S$. Let the group $\Diff(P, x, S_{\text{free}},\text{rel }\partial_1)$ be the subgroup of $\Diff(P, S_{\text{free}},\text{rel }\partial_1)$ that fixes $x$ and $\Diff(M, S_{\text{free}}, \text{rel }\partial_1)$ be the subgroup of $\Diff(M)$ that is the identity near the boundary components $\partial _1$ and fixes each of the other boundary components set-wise. Then there is a zig-zag of group homomorphisms that are homotopy equivalences between the group $\Diff(P, x, S_{\text{free}},\text{rel }\partial_1)$ and the group $\Diff(M, S_{\text{free}}, \text{rel }\partial_1)$.
\end{lem}
%
\begin{proof} For simplicity we consider the case where P is closed, the general case
follows similarly. Let $\Diff(M, \text{rel } \partial_{\text{SO}(3)})$ be the subgroup of $\Diff(M)$ that on a neighborhood of the boundary $S$ restricts to the subgroup of rigid rotations in the following sense. We fix a collar neighborhood $e\colon S^2\times [0,1)\hookrightarrow M$ extending the parametrization of the boundary $S$. The group of rotations $\text{SO}(3)$ acts on this collar neighborhood by acting on each slice $e(S^2\times\{t\})$ by a fixed rotation. For each element $f$ of $\Diff(M, \text{rel } \partial_{\text{SO}(3)})$, there exists a positive $\epsilon$ such that the restriction of $f$ to $e(S^2\times [0,\epsilon))$ is the same as the action of $\text{SO}(3)$. Recall that Smale's theorem (\cite{MR0112149}) implies that $\Diff_0(S^2)\simeq \text{SO}(3)$. Hence by the comparison of fiber sequences, it is easy to see that $\Diff(M, \text{rel } \partial_{\text{SO}(3)})$ is homotopy equivalent to $\Diff(M)$. 

So it is enough to show that the natural inclusion 
\[
\Diff(M, \text{rel } \partial_{\text{SO}(3)})\to \Diff(P, x),
\]
that is induced by extending a rotation on the boundary of $e(D^3)$ to its interior, is a homotopy equivalence.

Recall that Hatcher's theorem implies that $\Diff(D^3, \text{rel }S^2)$ is contractible which in turn implies that the restriction map $\Diff(D^3)\to \Diff(S^2)$ is a homotopy equivalence. Let  $\Diff(P, e(D^3))$ be the subgroup of $\Diff(P)$ that fixes $e(D^3)$ set-wise. Since $\Diff^+(D^3)\simeq \text{SO}(3)$, by the comparison of fibrations, one can see that the inclusion $\Diff(M, \text{rel } \partial_{\text{SO}(3)})\to\Diff(P, e(D^3))$ is a homotopy equivalence. 

On the other hand, since $D^3$ is contractible, the fiber sequence obtained by the action of $\Diff^+(D^3)$ on $D^3$ implies that the inclusion $\Diff^+(D^3, 0)\hookrightarrow \Diff^+(D^3)$ is a weak equivalence where $\Diff^+(D^3, 0)$ is the subgroup fixing the origin of $D^3$. Let $\Diff(P, e(D^3), x)$ be the subgroup of $\Diff(P, e(D^3))$ that fixes the point $x$. By comparison of fiber sequences, one can see that $\Diff(P, e(D^3), x)\to\Diff(P, e(D^3))$ is also a weak equivalence. Therefore, it is enough to show that $\Diff(P, e(D^3), x)\to \Diff(P,x)$ is a weak equivalence. Consider  the following fiber sequence that is a variant of the fiber sequence  \cite[Theorem C]{MR0123338}
\[
\Diff(P, e(D^3), x)\to \Diff(P, x)\to \text{Emb}^+((D^3,0), (P, x))/\Diff(D^3,0),
\]
where $\text{Emb}^+((D^3,0), (P, x))/\Diff(D^3,0)$ is the space of unparametrized smooth embeddings of $D^3$ that send its center to $x$. It is easy to see that $\text{Emb}^+((D^3,0), (P, x))/\Diff(D^3,0)$ is contractible. Hence, $\Diff(P, e(D^3), x)\to \Diff(P, x)$ is a weak equivalence.
\end{proof}
%

 \begin{proof}[Proof of \Cref{R}] We shall prove that $\BDiff(N, S_{\text{free}}, \text{rel }(\partial P\cup S_{\text{fixed}}))$ has a finite CW complex model. Let $M$ be the manifold obtained from $P$ by removing $e_i(\text{int}(\bD^3))$ for all $1\leq i\leq k$. Given \Cref{frame}, it is enough to prove that $\BDiff(M, S_{\text{free}}, \text{rel }\partial P)$ has a finite CW complex model. 
 
 Let $x_i$ be a point in $P$ given by the image of the center of the ball $e_i(\text{int}(\bD^3))$. Let $\Diff(P,\{x_1,\dots,x_k\}, \text{rel }\partial P)$ be the subgroup of $\Diff(P, \text{rel }\partial P)$ consisting of those elements that fix each $x_i$. 
 
 \begin{claim*}The classifying space $\BDiff(M, S_{\text{free}}, \text{rel }\partial P)$ is homotopy equivalent to $\BDiff(P,\{x_1,\dots,x_k\}, \text{rel }\partial P)$.
\end{claim*}
This is easily deduced by applying \Cref{markpoint} inductively $k$ times.  
%

Let $\text{PConf}_k(P)$ be the space of ordered configuration space of $k$ points in the interior of $P$. Forgetting the first point induces the map
\[
\text{PConf}_k(P)\to \text{PConf}_{k-1}(P)
\]
which is a fibration whose fiber is homotopy equivalent to the complement of $(k-1)$ points in $P$.  Hence, inductively we conclude that $\text{PConf}_k(P)$ is weakly equivalent to a finite CW complex. Considering the action of $\Diff(P, \text{rel }\partial P)$ on the $k$ points $\{x_1,\dots,x_k\}$ gives a Palais fiber sequence (\cite[Theorem C]{MR0123338}). By delooping this fiber sequence, we have a fiber sequence
\[
\text{PConf}_k(P)\to \BDiff(P,\{x_1,\dots,x_k\}, \text{rel }\partial P)\to \BDiff(P, \text{rel }\partial P).
\]
Since both $\BDiff(P, \text{rel }\partial P)$ and $\text{PConf}_k(P)$ have a finite CW complex model so does $ \BDiff(P,\{x_1,\dots,x_k\}, \text{rel }\partial P)$.
 \end{proof}
 Note that proof of \Cref{markpoint} and the proof of the claim above implies that if $\BDiff(M, \text{rel }\partial)$ is homologically finite, so is $\BDiff(N, S_{\text{free}}, \text{rel }(\partial P\cup S_{\text{fixed}}))$. Therefore, \Cref{Kont} is also implied by \Cref{Kont1}. 

 \section{Proof of the technical \Cref{mainlem}}\label{last}We denote the image of a simplex $\Delta\subset Y$ in $Y/G$ by $[\Delta]$. Because of conditions (2) and (3), the cells $[\Delta]$ give a finite CW structure on $Y/G$ and let $S$ be the finite set of all cells. 
 
 \noindent{\bf Step $0$:} For a simplex $\Delta\subset Y$, let $X(\Delta)$ be the subcomplex of $X$ that is the preimage of $\Delta$ under the map $X\to Y$. Then it is well known (\cite[Lemma 1.7 in Essay III at page 94]{MR0645390}) that the restriction of $X(\Delta)\to \Delta$ to the interior $\dot{\Delta}$ is a trivial fiber bundle. A more streamlined proof is given in \cite[Theorem 1.3.1]{MR0184242}. Note that the finiteness condition in \cite{MR0184242, MR0645390} is used in the second half of their proof where they want to prove it is a PL fiber bundle. To prove that topologically this restriction is a trivial fiber bundle as they showed, the finiteness condition on $X(\Delta)$ is not needed. 
 
 \noindent{\bf Step $1$:} Let $[\dot{\Delta}_{\alpha}]$ be the interior of the cell $[\Delta_{\alpha}]$ for $\alpha \in S$. We shall prove that for each cell $[\Delta_{\alpha}]$, the restriction of $\eta$ to the interior $[\dot{\Delta}_{\alpha}]$ is a trivial fiber bundle. 
 
 Let $X([\Delta_{\alpha}])\subset X$ be the subcomplex that is the pre-image of $[\Delta_{\alpha}]$ under the map $g$ that is the composition $g\colon X\to Y\to Y/G$ and let $X([\dot{\Delta}_{\alpha}])$ be the preimage of $[\dot{\Delta}_{\alpha}]$. So we want to show that the map
 \[
 X([\dot{\Delta}_{\alpha}])\hcoker G\to [\dot{\Delta}_{\alpha}],
\]
 is a trivial fiber bundle. Since the map $X\to Y$ is $G$-equivariant, the map $$X([\dot{\Delta}_{\alpha}])\to \text{orbit}(\dot{\Delta}_{\alpha})$$ is also a trivial fiber bundle  by step $0$ where $\text{orbit}(\dot{\Delta}_{\alpha})$ is the orbit of $\dot{\Delta}_{\alpha}$ under the $G$ action. Let $b_{\alpha}$ be the barycenter of $[\dot{\Delta}_{\alpha}]$. So there is a natural homeomorphism $X([\dot{\Delta}_{\alpha}])\cong  [\dot{\Delta}_{\alpha}]\times g^{-1}(b_{\alpha})$ which is $G$-equivariant where $G$-action on   $[\dot{\Delta}_{\alpha}]$ is trivial and on $X([\dot{\Delta}_{\alpha}])$ and $g^{-1}(b_{\alpha})$ are the natural actions restricted from the action on $X$. Hence, we obtain a natural homeomorphism
 \[
 X([\dot{\Delta}_{\alpha}])\hcoker G\cong [\dot{\Delta}_{\alpha}]\times (g^{-1}(b_{\alpha})\hcoker G),
 \]
 that commutes with the projection to $ [\dot{\Delta}_{\alpha}]$.

 \noindent{\bf Step 2:} Since the inclusion of sub-CW-complexes are cofibrations, there is an open neighborhood $U$ of each sub-CW-complex $K$ in $Y/G$ that deformation retracts to $K$. But we also want $U$ to satisfy the following property. Let $Y(K)$ be the $G$-invariant sub-simplicial-complex of $Y$ that is the preimage of $K$ via the quotient map $Y\to Y/G$ and let $Y(U)$  be a $G$-invariant open neighborhood of $Y(K)$ in the preimage of $U$ in $Y$. Similarly, we have  $G$-invariant subspaces $X(K)$ and $X(U)$. Since the map $X\to Y$ is simplicial, we want to choose $U$ small enough so that $X(U)$ deformation retracts to $X(K)$. 
 
 To fix such a neighborhood $U$, we can choose it combinatorially (one can also do it by putting metrics on $Y$ and $X$). Barycentrically subdivide $Y$ twice and choose the regular neighborhood $U$ of $K$. The simplicial map $f\colon X\to Y$ can be realized as the realization of the poset map between barycentric subdivisions. So then one can see that similar to the canonical deformation retraction of $U$ to $K$ that is done simplex by simplex, $X(U)$ also deformation retracts to $X(K)$.
 
 Since the preimage $\eta^{-1}(U)$ is $X(U)\hcoker G$, and the map $X(K)\to X(U)$ is $G$-equivariant, we deduce that the inclusion $\eta^{-1}(K)\hookrightarrow \eta^{-1}(U)$ is a weak homotopy equivalence. We can inductively apply Mayer-Vietoris as follows.
 
 \noindent{\bf Step 3:}  By induction on the dimension of the sub-CW-complex $K$, suppose we know that $\eta^{-1}(K)$ is $\mathcal{L}|_{\eta^{-1}(K)}$-homologically finite for $\text{dim}(K)<k$. The base of the induction is guaranteed by condition (2). Now we consider the sub-CW complex $K\cup_{ [\partial \Delta_\alpha]}  [\Delta_\alpha]$. Let the open neighborhood $U$ of $K$ be as in step $2$. So it is enough to show that $\eta^{-1}(U\cup [\dot{\Delta}_{\alpha}])$ is $\mathcal{L}|_{\eta^{-1}(U\cup [\dot{\Delta}_{\alpha}])}$-homologically finite. Note that this space is covered by open sets $\eta^{-1}(U)$ and $\eta^{-1}( [\dot{\Delta}_{\alpha}])$ whose intersection is the preimage of $ [\dot{\Delta}_{\alpha}]\cap U$. We can choose $U$ so that as in step $2$, the preimage $\eta^{-1}([\dot{\Delta}_{\alpha}]\cap U)$ is homeomorphic to $ ([\dot{\Delta}_{\alpha}]\cap U)\times (g^{-1}(b_{\alpha})\hcoker G)$. So $\eta^{-1}([\dot{\Delta}_{\alpha}]\cap U)$ is also  $\mathcal{L}|_{\eta^{-1}([\dot{\Delta}_{\alpha}]\cap U)}$-homologically finite.
 
 By induction, we know that $\eta^{-1}(U)$ is $\mathcal{L}|_{\eta^{-1}(U)}$-homologically finite. Step $1$ and condition (2)  imply that $\eta^{-1}( [\dot{\Delta}_{\alpha}])$ is of $\mathcal{L}|_{\eta^{-1}( [\dot{\Delta}_{\alpha}])}$-homologically finite. Therefore, Mayer-Vietoris with local coefficient systems (\cite[Chapter VI]{whitehead2012elements}) implies the same for the preimage of $K\cup_{\partial [\Delta_\alpha]}  [\Delta_\alpha]$. Given that $Y/G$ is a finite complex, this process of adding cells ends in a finite step which implies that $X\hcoker G$ is $\mathcal{L}$-homologically finite.

\section{The homological finiteness of $\eta^{-1}(x)$ for a vertex $x$} \label{fiber} Let $x$ be in $\mathcal{F}_0$ in the filtration \ref{filt} which is the image of a separating sphere $S\subset M$.  Our first step to identify the homotopy type of $\eta^{-1}(x)$ is the following proposition. Let $\mathcal{S}(M, [S])$ be the full subcomplex of $\mathcal{S}(M)$  whose vertices are the orbits of $S$ under the action of $\dD(M, \text{rel }\partial)$. 

 \begin{prop}\label{preimage} Let $x$ be in $\mathcal{F}_0$. The preimage $\eta^{-1}(x)$ is homotopy equivalent to 
 \[
 \mathcal{S}(M, [S])\hcoker \dD(M, \text{rel }\partial).
 \]
 \end{prop}
Before proving \Cref{preimage}, let us first observe some properties of the subcomplex  $\mathcal{S}(M, [S])$. 
 Note that in a simplex there could be vertices that are given by isotopic spheres and since they are disjoint, at least for $M$ satisfying the hypothesis of \Cref{Kont}, they bound an embedded $\bS^2\times [0,1]$. By Scharlemann's theorem, we can send two disjoint spheres $S_1$ and $S_2$, by a homeomorphism into $B$. Since the isotopy classes of embedded spheres in $B$ are determined by how an embedded sphere separates the boundary components (see \cite[Proposition 2.2]{hatcher1990finite}), two disjoint isotopic spheres in $B$ bound a $\bS^2\times [0,1]$. Now if the image of $S_1$ and $S_2$ in $B$ are not isotopic in $B$, then they separate disjoint diffeomorphic submanifolds of $M$ in which case, given that each irreducible factor has a nontrivial boundary, the spheres $S_i$ could not be isotopic in $M$ relative to the boundary. So we record this fact as a lemma.
 
\begin{lem}\label{parallel}
Two isotopic essential disjoint separating spheres in $M$ co-bound an embedded $S^2\times [0,1]$.
\end{lem}
\begin{rem}
 Even if $M$ does not have a boundary, the same statement holds. We sketch the argument here in case it might be useful in a more general situation. Let $S$ and $S'$ be two isotopic disjoint spheres in $M$ that are separating. Let $P$ be a submanifold that bounds $S$. Suppose that the isotopy of $M$ (which exists by the isotopy extension theorem) that sends $S$ to $S'$ sends $P$ to the submanifold $P'$. 
Since $M$ might be a closed $3$-manifold, either $P$ is disjoint from $P'$ or one contains the other. For example, if $P$ contains $P'$, then region $P"$ that bounds both $S$ and $S'$ in $P$ should be simply connected. Using the Poincar\' e conjecture and the Schoenflies theorem, we conclude that $P"$ is diffeomorphic to $S^2\times [0,1]$. Now assume that $P$ and $P'$ are disjoint.  So suppose $M$ is the connected sum of $P$ and $P'$ and the region $P"$ that bounds both $S$ and $S'$. If $P"$ is simply connected, then it is diffeomorphic to $S^2\times [0,1]$. So we assume that $P''$ is not simply connected. Since $S$ and $S'$ are essential, $P$ and $P'$ are not simply connected either. Therefore, $\pi_1(M)$ is the free product $\pi_1(P)\ast \pi_1(P")\ast \pi_1(P')$. 

But isotopy acts by the conjugation on the fundamental group and a conjugation cannot send $\pi_1(P)$ factor in the free product to any other factor by using normal forms for elements in a free product. In fact, in our case, there is a simpler argument since all irreducible factors have non-trivial boundaries, we know that $P$ and $P'$ have first homology groups. So the action of the isotopy on the first homology of $M$ would be non-trivial if $P$ and $P'$ were disjoint. 
\end{rem}
\begin{lem}\label{order}
Let $M$ be a compact $3$-manifold as in \Cref{Kont}. Then vertices in each simplex in $\mathcal{S}(M, [S])$ consist of disjoint isotopic spheres and there is a partial order on the vertices of $\mathcal{S}(M, [S])$ 
such that the complex $\mathcal{S}(M, [S])$  is the nerve of this poset structure. 
\end{lem} 
\begin{defn}Let  $\mathcal{S}_{\bullet}(M, [S])$ denote the semisimplicial set given by this ordering of vertices.
\end{defn}
\begin{proof}[Proof of \Cref{order}]
Since the prime decomposition of $M$ has irreducible factors with non-empty boundaries, an edge in $\mathcal{S}(M, [S])$ consists of two disjoint isotopic spheres in the orbit of $S$. This is because if we had two disjoint non-isotopic spheres in the orbit of $S$, given that $S$ is separating, these two spheres cut out homeomorphic pieces $P_1$ and $P_2$ such that neither of them contains the other and they are permuted by an element in $\tH(M, \text{rel }\partial)$. But this is not possible since each prime factor has a nontrivial boundary, the submanifolds $P_1$ and $P_2$ should have nontrivial boundary components. Given that elements in  $\tH(M, \text{rel }\partial)$ fix the boundary pointwise, they cannot permute disjoint submanifolds $P_1$ and $P_2$.  Hence each simplex in $\mathcal{S}(M, [S])$ consists of disjoint isotopic spheres. We call them parallel spheres.

Now to describe the partial order on vertices of $\mathcal{S}(M, [S])$, we need to put an order on parallel spheres. Recall that  $\partial_*M$ denotes the boundary component that contains the base point. Each separating sphere $S$ separates $M$ into connected components and one of them, which we denote by $P_S$, contains the base point. If we have isotopic disjoint separating spheres $S_i$'s, we order them by the inclusion of the components $P_{S_i}$'s. In other words, we can put a metric on $M$ and order $S_i$'s by their distance to the base point. We call this order on spheres of a simplex ``the inside to outside'' order.
\end{proof}

\begin{proof}[Proof of \Cref{preimage}] Let $x\in \mathcal{F}_0$ be an orbit of an isotopy class $[S]$ of a separating sphere $S$.  Let $[\mathcal{S}](M, [S])$ be the full subcomplex of $[\mathcal{S}](M)$ whose vertices are the orbits of $[S]$. Consider the commutative diagram
\begin{equation}\label{q1}
\begin{gathered}
\begin{tikzpicture}[node distance=5cm, auto]
  \node (A) {$\mathcal{S}(M)$};
  \node (B) [right of=A] {$[\mathcal{S}](M)$};
  \node (C) [below of=A, node distance=1.2cm] {$\mathcal{S}(M)/\dD(M, \text{rel }\partial)$};  
  \node (D) [below of=B, node distance=1.2cm] {$[\mathcal{S}](M)/\text{Mod}(M, \text{rel }\partial).$};
 
  \draw[->] (C) to node {$\eta_1$} (D);
  \draw [->] (A) to node {}(C);
  \draw [->] (A) to node {$$} (B);
  \draw [->] (B) to node {$$} (D);
 
\end{tikzpicture}
\end{gathered}
\end{equation}
The preimage of $x$ in $\mathcal{S}(M)$ is the subcomplex $\mathcal{S}(M, [S])$ which is $\dD(M, \text{rel }\partial)$-invariant. So $\eta_1^{-1}(x)$ is the quotient space $\mathcal{S}(M, [S])/\dD(M, \text{rel }\partial)$. Now by the naturality of the Borel construction, we have a  pullback diagram
\begin{equation}\label{q2}
\begin{gathered}
\begin{tikzpicture}[node distance=5cm, auto]
  \node (A) {$\mathcal{S}(M, [S])\hcoker\dD(M, \text{rel }\partial)$};
  \node (B) [right of=A] {$\mathcal{S}(M)\hcoker\dD(M, \text{rel }\partial)$};
  \node (C) [below of=A, node distance=1.2cm] {$\mathcal{S}(M, [S])/\dD(M, \text{rel }\partial)$};  
  \node (D) [below of=B, node distance=1.2cm] {$\mathcal{S}(M)/\dD(M, \text{rel }\partial).$};
 
  \draw[right hook->] (C) to node {$$} (D);
  \draw [->] (A) to node {}(C);
  \draw [right hook->] (A) to node {$$} (B);
  \draw [->] (B) to node {$\eta_2$} (D);
 
\end{tikzpicture}
\end{gathered}
\end{equation}
Hence, the preimage $\eta^{-1}(x)=\eta_2^{-1}(\mathcal{S}(M, [S])/\dD(M, \text{rel }\partial))$ is in fact homeomorphic to the Borel construction $\mathcal{S}(M, [S])\hcoker\dD(M, \text{rel }\partial)$.
\end{proof}
Now we define a semi-simplicial space whose underlying semi-simplicial set is $\mathcal{S}_{\bullet}(M, [S])$.
\begin{defn}\label{top}
$\mathcal{S}^{\tau}_{\bullet}(M, [S])$ is a semi-simplicial space whose $0$-simplices as a set is the same as $\mathcal{S}_{0}(M, [S])$ but it is topologized as the subspace of locally flat embeddings $\text{Emb}^{\sf{lf}}(\bS^2, M)$. By \Cref{order}, the space $\mathcal{S}^{\tau}_{0}(M, [S])$ is a topological poset. The semi-simplicial set $\mathcal{S}^{\tau}_{\bullet}(M, [S])$ is the nerve of this topological poset.
\end{defn}
For a $p$-simplex $e_p\in \mathcal{S}^{\tau}_{p}(M, [S])$, let $\text{Stab}(e_p)$ be the subgroup of  $\tH(M, \text{rel }\partial)$ that fixes $e_p$ pointwise. By \Cref{parallel} and \Cref{order}, each $p$-simplex is given by $p+1$ parallel spheres. Let $\sigma_1=(S_0, S_1, \dots, S_p)$ and $\sigma_2=(S_0', S_1', \dots, S_p')$ be two $p$-simplices where the order of the spheres are induced by the inside to outside order. Since the action of $\dD(M, \text{rel }\partial)$ on the set $\mathcal{S}_{0}(M, [S])$ is transitive, we can find $f_0\in \dD(M, \text{rel }\partial)$ such that $f_0(S_0')=S_0$. Note that $f_0(S_1')$ is isotopic to $S_1$ and they are disjoint from $S_0$. So by the isotopy extension theorem (\cite[Corollary 1.2]{edwards1971deformations}), there exists $f_1\in \dD_0(M, \text{rel }\partial)$ whose support does not intersect $S_0$ and $f_1(S_1')=S_1$. Continuing this process, we can find $f_i\in \dD_0(M, \text{rel }\partial)$ for $1\leq i\leq p$ such that $f_p\circ \dots \circ f_0$ sends $S_i'$ to $S_i$ for all $0\leq i\leq p$. Therefore, the action of $\dD(M, \text{rel }\partial)$  on the set of $p$-simplices is transitive. Hence the topological version of Shapiro's lemma implies that there is a map $\mathrm{B}\text{Stab}^{\delta}(e_p)\to \mathcal{S}_{p}(M, [S])\hcoker \dD(M, \text{rel }\partial)$ that is a weak equivalence. To show that the homotopy quotient $\mathcal{S}^{\tau}_{p}(M, [S])\hcoker \tH(M, \text{rel }\partial)$ is homotopy equivalent to $\mathrm{B}\text{Stab}(e_p)$, we need the following lemma.
Let $\text{Sing}_{\bullet}(X)$ denote the singular set of a topological space $X$. Recall that by \cite{MR0084138} and \cite[Lemma 1.7]{MR3995026}, the augmentation map $ \text{Sing}_{\bullet}(X)\to X$ induces a weak equivalence after fat realization.
\begin{lem}\label{stab}
There is a map $\mathrm{B}|\textnormal{Sing}_{\bullet}(\text{Stab}(e_p))|\to\mathcal{S}^{\tau}_{p}(M, [S])\hcoker \tH(M, \text{rel }\partial)$ that is a weak equivalence.
\end{lem}
\begin{proof}
Let  $\mathcal{S}^{\tau}_{p}(M, [S])_{\bullet}$ be defined similarly to $\mathcal{S}^{\tau}_{p}(M, [S])$ in \Cref{top} but instead of the space of locally flat embeddings, we consider the simplicial set of locally flat embeddings $\text{Emb}^{\sf{lf}}(\bS^2, M)_{\bullet}$ (see \cite[Definition 2.5]{nariman2020local}).
Then the action of $\tH(M, \text{rel }\partial)$ on $\mathcal{S}^{\tau}_{p}(M, [S])$ induces the following map of simplicial sets
\[
\text{Sing}_{\bullet}(\tH(M, \text{rel }\partial))\to \mathcal{S}^{\tau}_{p}(M, [S])_{\bullet},
\]
which by the parametrized isotopy extension theorem, it is a Kan fibration (see \cite[Page 19]{MR0356103} where it is called {\it A.I.T which is short for the ambient isotopy extension theorem}) whose fiber is $\text{Sing}_{\bullet}(\text{Stab}(e_p))$. 

Again the augmentation map $\mathcal{S}^{\tau}_{p}(M, [S])_{\bullet}\to \mathcal{S}^{\tau}_{p}(M, [S])$ induces a weak equivalence after fat realization and it is equivariant with respect to the map $\text{Sing}_{\bullet}(\tH(M, \text{rel }\partial))\to \tH(M, \text{rel }\partial)$. So we obtain a natural weak equivalence
\[
||\mathcal{S}^{\tau}_{p}(M, [S])_{\bullet}\hcoker \text{Sing}_{\bullet}(\tH(M, \text{rel }\partial))||\xrightarrow{\simeq}\mathcal{S}^{\tau}_{p}(M, [S])\hcoker \tH(M, \text{rel }\partial).
\]
Therefore, it is enough to show that for each $k$ in the simplicial direction, we have a map
\[
\mathrm{B}\textnormal{Sing}_{k}(\text{Stab}(e_p))\to \mathcal{S}^{\tau}_{p}(M, [S])_{k}\hcoker \text{Sing}_{k}(\tH(M, \text{rel }\partial)),
\]
that induces a weak equivalence.

Let $G$ be the discrete group $\text{Sing}_{k}(\tH(M, \text{rel }\partial))$ and let $H$ be its subgroup $\textnormal{Sing}_{k}(\text{Stab}(e_p))$. Note that there is natural isomorphism from the coset $G/H$ to $\mathcal{S}^{\tau}_{p}(M, [S])_{k}$. As is also mentioned in \cite[Section 3.1.1]{nariman2020local}, the Shapiro's lemma for discrete groups implies that there is a natural map $\mathrm{B}H\to (G/H)\hcoker G$ which is a weak homotopy equivalence.
\end{proof}

Now note that Thurston's homology isomorphism \ref{Thurston}, in the generality that McDuff proved in \cite[Section 2]{mcduff1980homology} implies that the map
\[
\mathrm{B}\text{Stab}^{\delta}(e_p)\to \mathrm{B}\text{Stab}(e_p),
\]
is an acyclic map. This map factors through $\mathrm{B}\text{Stab}^{\delta}(e_p)\to\mathrm{B}|\textnormal{Sing}_{\bullet}(\text{Stab}(e_p))|$ which is induced by mapping $\text{Stab}^{\delta}(e_p)$ to the $0$-simplices $\textnormal{Sing}_{0}(\text{Stab}(e_p))$. So using \Cref{stab}, we obtain that for each $p$, the natural map
\begin{equation}\label{isostab}
f_p\colon\mathcal{S}_{p}(M, [S])\hcoker \dD(M, \text{rel }\partial)\to \mathcal{S}^{\tau}_{p}(M, [S])\hcoker \tH(M, \text{rel }\partial),
\end{equation}
induces an acyclic map. We claim that the induced map between the realizations
\[
f\colon ||\mathcal{S}_{\bullet}(M, [S])\hcoker \dD(M, \text{rel }\partial)||\to ||\mathcal{S}^{\tau}_{\bullet}(M, [S])\hcoker \tH(M, \text{rel }\partial)||
\]
is also acyclic. Because for a local coefficient $\mathcal{L}$ on the realization $||X_{\bullet}||$ of a semisimplicial space $X_{\bullet}$,
there is a  spectral sequence that calculates the homology of the realization with local coefficients (see \cite[Section 1.4]{MR3995026})
\[
E^1_{p,q}\cong H_p( X_q; \mathcal{L}_q)\Rightarrow H_{p+q}( ||X_{\bullet}||; \mathcal{L}),
\]
where $\mathcal{L}_q$ is the local coefficient on the space of $q$-simplices by pulling back $\mathcal{L}$ via the map
\[
X_q\times b_q \to X_q\times \Delta^q\to ||X_{\bullet}||,
\]
where $b_q$ is the barycenter of $\Delta^q$. The maps $f_{\bullet}$ induce isomorphisms on $E^1$-pages of the corresponding spectral sequences. Therefore, given a local coefficient $\mathcal{L}$ on $||\mathcal{S}^{\tau}_{\bullet}(M, [S])\hcoker \tH(M, \text{rel }\partial)||$, the map $f$ induces a homology isomorphism with local coefficients on $||\mathcal{S}_{\bullet}(M, [S])\hcoker \dD(M, \text{rel }\partial)||$ that is the pull back $f^*(\mathcal{L})$. Hence, to prove weak homological finiteness for $\eta^{-1}(x)$, we prove strong homological finiteness of $$||\mathcal{S}^{\tau}_{\bullet}(M, [S])\hcoker \tH(M, \text{rel }\partial)||,$$ by finding a model for it that sits in a homotopy fiber sequence \ref{fib1} to be able to argue inductively on the number of prime factors.

 We can define the smooth version of $\mathcal{S}^{\tau}_{\bullet}(M, [S])$ and work with diffeomorphism groups. But in this dimension and for codimension $1$ embeddings, the corresponding objects in the $C^0$ and $C^{\infty}$-category are weakly homotopy equivalent. So we stick to the $C^0$-category. 
 
 \section{Parallel spheres and bar constructions}\label{barconstruction} Let $e\colon\partial_* M\hookrightarrow \{0\}\times \bR^{\infty}$ be a fixed locally flat embedding of the boundary component that contains the base point and let $\text{Emb}^{\sf{lf}}_{\partial}(M, [0,\infty)\times \bR^{\infty})$ be the space of locally flat embeddings of $M$ whose intersection with $\{0\}\times \bR^{\infty}$ is  $e(\partial_* M)$. Lashof in \cite[Appendix, theorem 1]{lashof1976} considered three variants of spaces of topological embeddings that are Kan complexes and he showed that they  are homotopy equivalent as long as the dimension of the target is larger than $4$ and the codimension of the embedding is at least $3$. One of these variants is the singular set of the on the space of locally flat embeddings. The proof in \cite[Lemma 2.2]{kupers2015proving} implies that one of Lashof's model for $\text{Emb}^{\sf{lf}}_{\partial}(M, [0,\infty)\times \bR^{\infty})$ is weakly contractible.  Therefore, the space $\text{Emb}^{\sf{lf}}_{\partial}(M, [0,\infty)\times \bR^{\infty})$ that is homotopy equivalent to the realization of its singular set (\cite{MR0084138}), is also weakly contractible. Working simplicially first as in \cite[Lemma 2.6 and Lemma 2.7]{kupers2015proving}, and then geometrically realizing, we deduce that $\text{Emb}^{\sf{lf}}_{\partial}(M, [0,\infty)\times \bR^{\infty})/\tH(M, \text{rel }\partial)$ is a model for the classifying space $\mathrm{B}\tH(M, \text{rel }\partial)$ and the semi-simplicial space
 \[
 \mathcal{M}_{\bullet}(M, [S])\coloneqq \frac{\mathcal{S}^{\tau}_{\bullet}(M, [S])\times \text{Emb}^{\sf{lf}}_{\partial}(M, [0,\infty)\times \bR^{\infty})}{\tH(M, \text{rel }\partial)},
 \]
is level-wise weakly equivalent to $\mathcal{S}^{\tau}_{\bullet}(M, [S])\hcoker \tH(M, \text{rel }\partial)$. We think of $ \mathcal{M}_{\bullet}(M, [S])$ as a configuration space of the manifolds in $[0,\infty)\times \bR^{\infty}$ that are homeomorphic to $M$ satisfying the boundary condition and with a choice of parallel spheres in the orbit of $S$. 

Now we shall define a two-sided bar construction model for $ \mathcal{M}_{\bullet}(M, [S])$. Let $\iota_0\colon \bS^2\hookrightarrow\{0\}\times \bR^{\infty}$ be a fixed embedding and we denote the embedding $\iota_0+t\cdot e_1$ in $\{t\}\times \bR^{\infty}$ by $\iota_t$.
\begin{defn} We define the space $\mathcal{M}(M)$ to be the quotient space $\text{Emb}^{\sf{lf}}_{\partial}(M, [0,\infty)\times \bR^{\infty})/{\tH(M, \text{rel }\partial)}$. This is a model for the classifying space $\mathrm{B}\text{Homeo}(M, \text{rel }\partial)$. For an element $f\in \mathcal{M}(M)$, we let $\text{image}(f)$  be the unparametrized submanifold of $[0,\infty)\times \bR^{\infty}$  given by $f$ that is homeomorphic to $M$.
\end{defn}
\begin{defn}\label{cylindar}
Let $\mathfrak{D}$ be the topological monoid given by space of pairs $(t, f)\in [0,\infty)\times \mathcal{M}(\bS^2\times [0,1])$ where 
\begin{itemize}
\item we have $\text{image}(f)\subset [0,t]\times \bR^{\infty}$ and
\item  the intersection of $\text{image}(f)$ with $\{0\}\times \bR^{\infty}$ and $\{t\}\times \bR^{\infty}$ are given by embeddings $\iota_0$ and $\iota_t$ respectively. 
\end{itemize}
The monoid structure is given by adding the $t$-coordinates and stacking the embeddings next to each other. 
\end{defn}
 It is standard to see that the topological monoid $\mathfrak{D}$ is homotopy equivalent to $\BD(\bS^2\times [0,1], \text{rel }\partial)$. The homotopy type of $\tH(\bS^2\times [0,1], \text{rel }\partial)$ is known (\cite[Appendix]{hatcher1983proof}) to be the loop space $\Omega(\text{SO}(3))$. Also recall that the inclusion $\text{SO}(3)\hookrightarrow \tH_0(\bS^2)$ is a weak equivalence (\cite[Theorem 1.2.2]{hamstrom1974homotopy}). 
 Recall that when we cut $M$ along $S$, we obtain two pieces $M_1$ and $M_2$ where $M_1$ contains $\partial_* M$, the boundary component of $M$ with the base point. Now we define moduli space models for $\BD(M_1, \text{rel }\partial)$ and $\BD(M_2, \text{rel }\partial)$ that are modules over the topological monoid $\mathfrak{D}$.
 
 \begin{defn}\label{module1}
 Let $\mathcal{L}$ be the space of pairs $(t, f)\in [0,\infty)\times \mathcal{M}(M_1)$ such that 
 \begin{itemize}
 \item The image($f$) lies in the strip $[0,t]\times \bR^{\infty}$.
 \item  The intersection $\text{image}(f)\cap \{0\}\times \bR^{\infty}$ is given by the embedding $e$ (the embedding of the base boundary component) and $\text{image}(f)\cap \{t\}\times \bR^{\infty}$ is given by $\iota_t$.
 \end{itemize}
 Similarly, let $\mathcal{R}$ to be the space of pairs $(t, f)\in [0,\infty)\times \mathcal{M}(M_2)$ such that 
 \begin{itemize}
 \item The image $\text{image}(f)$ lies in $[t,\infty)\times \bR^{\infty}$.
 \item  The intersection  $\text{image}(f)\cap \{t\}\times \bR^{\infty}$ is given by $\iota_t$.
 \end{itemize}
 \end{defn}
 
It is easy to see that  $\mathcal{L}$ and $\mathcal{R}$ are weakly equivalent to $\BD(M_1, \text{rel }\partial)$ and $\BD(M_2, \text{rel }\partial)$ respectively.

 Note that there is a right $\mathfrak{D}$-module structure on $\mathcal{L}$ such that the action of $(t,f)\in \mathfrak{D}$ on $(t', f')\in \mathcal{L}$ is the pair $(t+t', f' \sqcup (f+t'\cdot e_1))$ where $f+t'\cdot e_1$ is the embedding $f$ shifted in the first coordinate to the right by $t'$. Similarly, there is a left $\mathfrak{D}$-module structure on $\mathcal{R}$.
 
  \begin{figure}[h]

\begin{tikzpicture}[scale=.4]
\begin{scope}[shift={(7,0)}]
\draw[line width=1.05pt] (.46,-3)--(-.8,- 6.7);
\draw[line width=1.05pt] (.46,5.6)--(-.8,1.3);
\draw[line width=1.05pt] (-.8,- 6.7)--(-.8,1.3);
\draw[line width=1.05pt] (.46,-3)--(.46,-2.5);
\draw[line width=1.05pt] (.46,5.6)--(.35,2.5);
\begin{scope}[shift={(9,0)}]
\draw[line width=1.05pt] (.46,-3)--(-.8,- 6.7);
\draw[line width=1.05pt] (.46,5.6)--(-.8,1.3);
\draw[line width=1.05pt] (-.8,- 6.7)--(-.8,1.3);
\draw[line width=1.05pt] (.46,-3)--(.46,-2.5);
\draw[line width=1.05pt] (.46,5.6)--(.35,2.5);
\end{scope}
\begin{scope}[shift={(-12,0)}]
\draw[line width=1.05pt] (.46,-3)--(-.8,- 6.7);
\draw[line width=1.05pt] (.46,5.6)--(-.8,1.3);
\draw[line width=1.05pt] (-.8,- 6.7)--(-.8,1.3);
\draw[line width=1.05pt] (.46,-3)--(.46,-2.5);
\draw[line width=1.05pt] (.46,5.6)--(.35,2.5);
\end{scope}
\draw[line width=1.05pt] [dashed] (0,.-2.5) arc (-90:90:0.5 and 2.5);
\draw[line width=1.05pt] [dashed] (9,-2.5) arc (-90:90:0.5 and 2.5);
\draw[line width=1.05pt]  (9,2.5) arc (90:270:0.5 and 2.5);
\draw[line width=1.05pt] [dashed] (.46,.-2.5)--(0,-2.5);
\draw[line width=1.05pt]  (-.8,.-2.5)--(-5,-2.5);
\draw[line width=1.05pt] [dotted] (-.8,.-2.5)--(0,-2.5);

\draw[line width=1.05pt]  [dashed] (-5,-2.5)--(-7,-2.5);
\draw[line width=1.05pt] [dashed] (.46,2.5)--(0,2.5);
\draw[line width=1.05pt]  (-.5,2.5)--(-5,2.5);

\draw[line width=1.05pt] [dashed] (-12,-2.5) arc (-90:90:0.5 and 2.5);
\draw[line width=1.05pt]  (-12,2.5) arc (90:270:0.5 and 2.5);
\draw [line width=1.05pt] (-12,2.5)--(-3,2.5);
\draw [line width=1.05pt] (-12,-2.5)--(-3,-2.5);

\draw[line width=1.05pt]  [dashed] (-5,2.5)--(-7,2.5);
\draw [line width=1.05pt] (0,2.5) arc (90:270:0.5 and 2.5);
\draw [line width=1.05pt] (0,2.5)--(8.5,2.5);
\draw [line width=1.05pt] [dotted] (8.5,2.5)--(9,2.5);

\draw [line width=1.05pt] (9,2.5)--(16.4,2.5);

\draw [line width=1.05pt] [dashed](16.4,2.5)--(18.2,2.5);
\draw [line width=1.05pt] (0,-2.5)--(8,-2.5);
\draw [line width=1.05pt] [dotted] (8,-2.5)--(9,-2.5);

\draw [line width=1.05pt] (9,-2.5)--(16.4,-2.5);
%
\draw [line width=1.05pt] [dashed](16.4,-2.5)--(18.2,-2.5);
\begin{scope}[shift={(-9,0)}]
\draw [line width=1.05pt] (4.6,.5) arc (240:300:2.5 and 6.75);
\draw   [line width=1.7pt] (6.87, 0.13) arc (48:150:1.2 and 0.9);
\end{scope}
\begin{scope}[shift={(-14,0)}]
\draw [line width=1.05pt] (4.6,.5) arc (240:300:2.5 and 6.75);
\draw   [line width=1.7pt] (6.87, 0.13) arc (48:150:1.2 and 0.9);
\end{scope}
\begin{scope}[shift={(9,0)}]
\draw [line width=1.05pt] (4.6,.5) arc (240:300:2.5 and 6.75);
\draw   [line width=1.7pt] (6.87, 0.13) arc (48:150:1.2 and 0.9);
\end{scope}
\node at (0,-8) {$\{a\}\times \bR^{\infty}$};
\node at (9,-8) {$\{b\}\times \bR^{\infty}$};
\node at (-12,-8) {$\{0\}\times \bR^{\infty}$};
\node at (4.5,-6) {$[0,1]\times \bS^2$};
\node at (-6.5,-6) {$M_1$};
\node at (-6.5,-4) {$\rotcong$};
\node at (4.5,-4) {$\rotcong$};
\node at (14,-4) {$\rotcong$};
\node at (14,-6) {$M_2$};

\end{scope}
\end{tikzpicture} 
\caption{Schematic picture in one dimension lower on how $\mathrm{BD}$ acts on $\mathrm{BR}$ and $\mathrm{BL}$}
\end{figure}
We consider the two-sided bar resolution given by the semi-simplicial space
\[
B_{p}(\mathcal{L}, \mathfrak{D}, \mathcal{R})= \mathcal{L}\times \mathfrak{D}^p\times \mathcal{R}
\]
where the face map $d_0$ and $d_p$ are given by the actions of $\mathfrak{D}$ on $\mathcal{L}$ and $\mathcal{R}$ respectively and other face maps are induced by the monoid structure of $\mathfrak{D}$. 

Note that there is a natural semi-simplicial map 
\[
h_p\colon B_{p}(\mathcal{L}, \mathfrak{D}, \mathcal{R})\to \mathcal{M}_{p}(M, [S])
\]
by gluing the embeddings in order and choosing the spheres along which the embeddings are glued as a choice of parallel spheres in the orbit of $S$. On the other hand, recall that the action of $\tH(M,\text{rel }\partial)$ on $\mathcal{S}^{\tau}_{p}(M, [S])$  is transitive for each $p$. For a $p$-simplex $\sigma \in \mathcal{S}^{\tau}_{p}(M, [S])$, the homotopy quotient $\mathcal{S}^{\tau}_{p}(M, [S])\hcoker \tH(M,\text{rel }\partial)$ is weakly equivalent to $\mathrm{B}\text{Stab}(\sigma)$ by \Cref{stab}, and the space $\mathcal{M}_{p}(M, [S])$ is also weakly equivalent to $\mathrm{B}\text{Stab}(\sigma)$. As we shall see below  the semi-simplicial map $h_{\bullet}$ is level-wise a weak equivalence, and we have weak equivalences between the  (fat) realizations
\begin{equation}\label{bar}
||B_{\bullet}(\mathcal{L}, \mathfrak{D}, \mathcal{R})||\xrightarrow{\simeq} ||\mathcal{M}_{\bullet}(M, [S])||\simeq ||\mathcal{S}^{\tau}_{\bullet}(M, [S])\hcoker \tH(M,\text{rel }\partial)||.
\end{equation}
\begin{lem}
    The map $h_p$ is a weak equivalence for each integer $p>0$.
\end{lem}
\begin{proof}
    Suppose $M$ is embedded in $\bR\times \bR^{\infty}$ such that the spheres in the simplex $\sigma$ are embedded in slices $\{a_i\}\times \bR^\infty$ for some real numbers $a_i$. Then $\text{Stab}(\sigma)$ is isomorphic to the product of homeomorphism groups of submanifolds between slices. So we can use these slices to obtain a natural map $\mathrm{B}\text{Stab}(\sigma)$ to $B_{p}(\mathcal{L}, \mathfrak{D}, \mathcal{R})$ which is a weak equivalence. Therefore, it is enough to show that the composition
    \[
   \tilde{h}_p\colon \mathrm{B}\text{Stab}(\sigma)\to \mathcal{M}_{p}(M, [S]),
    \]
    is a weak equivalence. It is easier to show this in the $C^\infty$-category. Remember in this dimension, the corresponding objects of our interests are weakly equivalent. We denote the smooth versions by superscript $\infty$. We have a weak equivalence $\iota\colon\mathrm{B}\text{Stab}^\infty(\sigma)\to \mathrm{B}\text{Stab}(\sigma)$. Note that $ \tilde{h}^\infty_p\coloneqq\tilde{h}_p\circ \iota$ lands in 
    \[
    \mathcal{M}^\infty_{p}(M, [S])\coloneqq \frac{\mathcal{S}^{\infty}_{\bullet}(M, [S])\times \text{Emb}^{\infty}_{\partial}(M, [0,\infty)\times \bR^{\infty})}{\Diff(M, \text{rel }\partial)}.
    \]
    Given that corresponding objects in $C^\infty$ and $C^0$ are weakly equivalent here, it is enough to show that $\tilde{h}^\infty_p$ is a weak equivalence. 

    Let $G=\Diff(M, \text{rel }\partial)$ and $H$ be the subgroup $\text{Stab}^\infty(\sigma)$. By the parametrized isotopy extension theorem $H$ has a local section and $H\to G\to G/H$ is a principal $H$-bundle. Therefore, similar to the proof of \Cref{stab}, we have a topological Shapiro's lemma where in this case the map  $\tilde{h}^\infty_p\colon\mathrm{B}H\xrightarrow{\simeq} (G/H)\hcoker G$ is a weak equivalence.
\end{proof}
Since the topological monoid $ \mathfrak{D}$ is path-connected, similar to \cite[Theorem 4.5]{kupers2019some}, we have a homotopy fiber sequence
\begin{equation}\label{fib4}
 \mathcal{R}\to ||B_{\bullet}(\mathcal{L}, \mathfrak{D}, \mathcal{R})||\to ||B_{\bullet}(\mathcal{L}, \mathfrak{D},*)||.
\end{equation}

We shall use the technique of the Weiss fibration as was explained in \cite{kupers2019some} to show that this is the desired homotopy fiber sequence \ref{fib1}.
\subsection{Kupers' bar resolution for self-embeddings} 
We shall use Kupers' theorem (\cite[Section 4]{kupers2019some}) to determine the homotopy type of $||B_{\bullet}(\mathcal{L}, \mathfrak{D},*)||$ to be able to say that is weakly homologically finite.

The manifold $M_1$ has a sphere boundary component $\partial_0 M_1=S$ which we call the free boundary component and we denote the union of the rest of the boundary components by $\partial_1M_1$ which we call the fixed boundary components. Let $\tH(M_1, S, \text{rel }\partial_1)$ be the group of homeomorphisms of $M_1$ that fix $S$ set wise and fix a neighborhood of $\partial_1M_1$ pointwise.

\begin{thm}\label{Kupers'}
There is a zig-zag of weak equivalences between the bar resolution $||B_{\bullet}(\mathcal{L}, \mathfrak{D},*)||$ and the classifying space $\BD(M_1, S, \text{rel }\partial_1)$. 
\end{thm}
Kupers in \cite{kupers2019some} gives a model for {\it Weiss fiber sequence} where the set-up is that we have an $n$-dimensional manifold $M$ with a non-empty boundary and we fix an embedded $\bD^{n-1}\hookrightarrow \partial M$. Let $\text{Emb}^{\cong}_{1/2 \partial}(M)$ be the space of self-embeddings of $M$ that are identity on $\partial M\backslash \text{int}(\bD^{n-1})$ and are isotopic to a diffeomorphism that fixes the boundary through isotopies fixing $\partial M\backslash \text{int}(\bD^{n-1})$. There exists a fiber sequence named after Michael Weiss
\[
\BDiff(M, \text{rel }\partial)\to \mathrm{B}\text{Emb}^{\cong}_{1/2 \partial}(M)\to \mathrm{B}\BDiff(\bD^n, \text{rel }\partial), 
\]
where the delooping $ \mathrm{B}\BDiff(\bD^n, \text{rel }\partial)$ is defined by considering $\BDiff(\bD^n, \text{rel }\partial)$ as a topological monoid similar to \Cref{cylindar} and the $E_1$-structure on this topological monoid is given by stacking along the first coordinate when we consider the interior of the cube as a model for the interior of the disk. We want to use a similar fiber sequence for a compact $3$-manifold $M$ with a non-empty boundary where at least one of its boundary components is homeomorphic to $\bS^2$.
\begin{proof}[Proof of \Cref{Kupers'}]  Let $\text{Emb}^{\cong}_{\partial_1}(M)$ the space of self locally flat embeddings of $M_1$ that are the identity on the fixed boundary components $\partial_1 M_1$ and are isotopic to a homeomorphism that fixes the boundary through isotopies fixing $\partial_1 M_1$. 

Given that in dimension $3$, the corresponding objects in $C^0$ and $C^{\infty}$ category are weakly equivalent (\Cref{CH}), we may apply the proof of \cite[Theorem 4.17]{kupers2019some} mutatis mutandis to conclude that there is a fiber sequence
\begin{equation}\label{fibb3}
\BD(\bS^2\times [0,1], \text{rel }\partial)\to \BD(M_1, \text{rel }\partial)\to \mathrm{B}\text{Emb}^{\cong}_{\partial_1}(M_1)
\end{equation}
that is induced by the natural inclusions $$\tH(\bS^2\times [0,1], \text{rel }\partial)\to \tH(M_1, \text{rel }\partial)\to \text{Emb}^{\cong}_{\partial_1}(M_1).$$ 
Moreover, his method shows that there is a weak equivalence $||B_{\bullet}(\mathcal{L}, \mathfrak{D},*)||\simeq \mathrm{B}\text{Emb}^{\cong}_{\partial_1}(M_1)$.  On the other hand, we have a fiber sequence
\[
\tH(M_1, \text{rel }\partial)\to \tH(M_1, S, \text{rel }\partial_1)\to \tH_0(\bS^2),
\]
where the last map is the restriction to $S$. Since homeomorphisms in $\tH(M_1, S, \text{rel }\partial_1)$ fix at least one boundary component, they are orientation preserving so they restrict to $\tH_0(\bS^2)$. This is a $\tH(M_1, \text{rel }\partial)$-bundle over $\tH_0(\bS^2)$. So we obtain a fiber sequence
\begin{equation}\label{fib3}
\tH_0(\bS^2)\to \BD(M_1, \text{rel }\partial)\to \BD(M_1, S, \text{rel }\partial_1),
\end{equation}
where the first map is the classifying map for the $\tH(M_1, \text{rel }\partial)$-bundle over $\tH_0(\bS^2)$.

Note that the group $\tH(M_1, S, \text{rel }\partial_1)$ is a submonoid of $\text{Emb}^{\cong}_{\partial_1}(M_1)$ so we have a natural map between their classifying spaces induced by this inclusion. The advantage of using $\mathrm{B}\text{Emb}^{\cong}_{\partial_1}(M_1)$ over the bar construction $||B_{\bullet}(\mathcal{L}, \mathfrak{D},*)||$ is that we have an explicit map 
\[
\BD(M_1, S, \text{rel }\partial_1)\to\mathrm{B}\text{Emb}^{\cong}_{\partial_1}(M_1)
\]
that we shall argue that is a weak equivalence.

To show that the base spaces of the two fiber sequences (\ref{fibb3} and \ref{fib3}) are weakly equivalent, we need maps of fiber sequences that induce weak equivalences between fibers and total spaces. They have the same total spaces so let us describe the map between fibers.

Let $\tH(\bS^2\times[0,1], S, \text{rel }\partial)$ be the subgroup of $\tH(\bS^2\times [0,1])$ that fixes $\bS^2\times\{1\}$ pointwise and fixes $\bS^2\times\{0\}$ set-wise. We have a $\tH(\bS^2\times[0,1],\text{rel }\partial)$-fiber sequence
\begin{equation}\label{S^2}
\tH(\bS^2\times[0,1],\text{rel }\partial)\to \tH(\bS^2\times[0,1], S, \text{rel }\partial)\to \tH_0(\bS^2),
\end{equation}
where the last map is the restriction to $\bS^2\times\{0\}$. This fiber sequence is classified by a map
\[
f\colon\tH_0(\bS^2)\to \BD(\bS^2\times[0,1],\text{rel }\partial).
\]
The total space of the fiber sequence (\ref{S^2}) is contractible by Hatcher (\cite[Appendix]{hatcher1983proof}). So the classifying map $f$ is a weak equivalence. Therefore, we obtain a map of fiber sequences 


\begin{equation}\label{commdiagm1}
\begin{gathered}
\begin{tikzpicture}[node distance=5cm, auto]
  \node (A) {$\BD(M_1, \text{rel }\partial)$};
  \node (B) [right of=A] {$\mathrm{B}\tH(M_1, \text{rel }\partial)$};
  \node (C) [below of=A, node distance=1.2cm] {$\mathrm{B}\text{Emb}^{\cong}_{\partial_1}(M_1)$};  
  \node (D) [below of=B, node distance=1.2cm] {$\BD(M_1, S, \text{rel }\partial_1)$};
 \node (A') [above of=A, node distance=1.2cm] {$\BD(\bS^2\times [0,1], \text{rel }\partial)$};
 \node (B') [above of=B, node distance=1.2cm] {$\tH_0(\bS^2)$};

  \draw[<-] (C) to node {$$} (D);
  \draw [->] (A) to node {}(C);
  \draw [<-] (A) to node {$=$} (B);
  \draw [->] (B) to node {$$} (D);
                   \draw [->] (A') to node {$$} (A);
                 \draw [->] (B') to node {$$} (B);
                  \draw [<-] (A') to node {$\simeq$} (B');
 
\end{tikzpicture}
\end{gathered}
\end{equation}

It is easy to see that the diagram \ref{commdiagm1} commutes. Given the indicated weak equivalences in the above diagrams, the comparison of the long exact sequence of the homotopy groups for the fiber sequences implies that the bottom map is a weak equivalences between $ \BD(M_1, S, \text{rel }\partial_1)$ and $\mathrm{B}\text{Emb}^{\cong}_{\partial_1}(M_1)$.
\end{proof}
By induction, \Cref{Kont} implies that $\BD(M_1, S, \text{rel }\partial_1)$ and $\mathcal{R}$ are strongly homologically finite. Therefore, \Cref{kupersfinitenesslemma} implies that in the homotopy fiber sequence (\ref{fib4}), the total space is strongly homologically finite which in turn implies the same for $\eta^{-1}(x)$ when $x\in \mathcal{F}_0$.

For the inductive argument to analyze $\eta^{-1}(x)$ when $x$ is in higher filtration, we need a slight generalization of \Cref{Kupers'} whose proof is the same. Let $M_1$ be a compact $ 3$ manifold such that we decompose the boundary components into three subsets: $\partial_{-1}=S_{\text{free}}$ which is a subset of spherical boundary components, $\partial_0=S$ which is a spherical boundary component that is not in $\partial_{-1}$ and $\partial_1$ which is the union of the rest of the boundary components. Let $\tH(M_1, S_{\text{free}}, \text{rel }\partial_1\cup \partial_0)$ be the subgroup of $\tH(M_1, \text{rel }\partial_1\cup \partial_0)$ which fixes each boundary component in $\partial_{-1}$ set-wise and let $\tH(M_1, S_{\text{free}}\cup S, \text{rel }\partial_1)$ be the subgroup of $\tH(M_1, \text{rel }\partial_1)$ which fixes each  boundary component in $S_{\text{free}}\cup S$ set-wise. Similarly, one can define a $\mathfrak{D}$-module that has the homotopy type of the classifying space $\BD(M_1, S_{\text{free}}, \text{rel }\partial_1\cup \partial_0)$ where module structure is induced by gluing to the boundary component $\partial_0=S$.
\begin{thm}\label{Kupers}
There is a zig-zag of weak equivalences between the bar resolution $||B_{\bullet}(\mathcal{L}, \mathfrak{D},*)||$ and the classifying space $\BD(M_1, S_{\text{free}}\cup S, \text{rel }\partial_1)$. 
\end{thm}
So roughly speaking,   the construction $||B_{\bullet}(\mathcal{L}, \mathfrak{D},*)||$ makes one more spherical boundary component (the component $\partial_0$) free.
\begin{rem}\label{rem1}
 For $M$ being connected sum of {\it two} irreducible $3$-manifolds with non-empty boundaries, Hatcher's theorem (\cite{hatcher1981diffeomorphism}) about $3$-manifolds also implies Kontsevich's finiteness.   Let $S$ be a separating sphere in $M$, then his theorem implies that $\iota\colon\BDiff(M, S, \text{rel }\partial)\to \BDiff(M,\text{rel }\partial)$ is a homotopy equivalence where $\Diff(M, S, \text{rel }\partial)$ is the subgroup of $\Diff(M,\text{rel }\partial)$ that fixes $S$ set wise. We have a homotopy fiber sequence
\[
\BDiff(M_2,\text{rel }\partial)\to \BDiff(M, S,\text{rel }\partial)\to \BDiff(M_1, S, \text{rel }\partial M),
\]
where $M_1$ and $M_2$ are obtained by cutting $M$ along $S$. This homotopy fiber sequence is similar to the one in (\ref{fib1}).
\end{rem}
\section{Higher filtrations and the last step of the proof of \Cref{main'}} \label{higher}For $k>0$  suppose $x\in \mathcal{F}_k-\mathcal{F}_{k-1}$. We want to generalize the above bar resolution model by iterating the same construction $(k+1)$ times for each separating sphere in different orbits. Then we write this iterated bar construction in a fiber sequence whose base and fiber, by induction on the number of prime factors and by applying \Cref{R} on free sphere boundary components, are weakly homologically finite.

 Let  ${\bf S}=\{S_0, S_1,\dots, S_k\}$ be a set of $k+1$  separating spheres where $S_i$'s are pairwise in different orbit classes under the action of $\dD(M, \text{rel }\partial)$. We pick an order on these spheres and note that they cannot be permuted via the action of  $\dD(M, \text{rel }\partial)$. Let $\Delta([{\bf S}])$ be a $k$-simplex in $[S](M)$ whose vertices are the isotopy classes $[{\bf S}]=\{[S_0], [S_1], \dots, [S_k]\}$. As in \Cref{last}, let $[\Delta([{\bf S}])]$ be the cell in $[S](M)/\text{Mod}(M, \text{rel }\partial)$ that is the image of the simplex $\Delta([{\bf S}])$ and let $[\dot{\Delta}([{\bf S}])]$ be the interior of this cell. Suppose the point $x$ lies in $[\dot{\Delta}([{\bf S}])]$. 

 \[
\begin{tikzpicture}[node distance=5cm, auto]
  \node (A) {$\mathcal{S}(M)$};
  \node (B) [right of=A] {$[\mathcal{S}](M)$};
  \node (C) [below of=A, node distance=1.2cm] {$\mathcal{S}(M)\hcoker\dD(M, \text{rel }\partial)$};  
  \node (D) [below of=B, node distance=1.2cm] {$[\mathcal{S}](M)/\text{Mod}(M, \text{rel }\partial)$};
 
  \draw[->] (C) to node {$\eta$} (D);
  \draw [->] (A) to node {}(C);
  \draw [->] (A) to node {$q$} (B);
  \draw [->] (B) to node {$\pi$} (D);
 
\end{tikzpicture}
 \]
As we mentioned in step $0$ of \Cref{mainlem}, the restriction of $p$ to the preimage of $\dot{\Delta}([{\bf S}])$ is a trivial fiber bundle. Let us recall part of the statement and the proof of \cite[Theorem 1.3.1]{MR0184242} that is helpful to determine the preimage $q^{-1}(y)$ for a point $y\in \dot{\Delta}([{\bf S}])$.

If $f\colon X\to \Delta$ is a simplicial map from a simplicial complex $X$ to a simplex $\Delta$, then for each $y\in \dot{\Delta}$, there is a homeomorphism $h\colon f^{-1}(y)\times \dot{\Delta}\to f^{-1}(\dot{\Delta})$. We can choose $h$ so that for a subcomplex $L$ of $X$, we have 
\begin{equation}\label{*}
h^{-1}(L)=(f^{-1}(y)\cap L)\times \dot{\Delta}.
\end{equation}
As it is explained in \cite[Lemma 1.7 at page 94]{MR0645390} and \cite[Theorem 1.3.1]{MR0184242}, the simplicial complex $X$ is  canonically a subcomplex  of the join 
\[
f^{-1}(v_0)*f^{-1}(v_1)*\dots*f^{-1}(v_k),
\]
where $v_0, v_1, \dots, v_k$ are the vertices of $\Delta$. If $X$ were a full simplex, then $f^{-1}(y)$ would be canonically homeomorphic to 
\begin{equation}\label{prod}
f^{-1}(v_0)\times \dots \times f^{-1}(v_k).
\end{equation}
 So we have such a product structure for each simplex $L$ contained in $f^{-1}(y)$. We want to apply this to the simplicial map $q\colon \mathcal{S}(M)\to [\mathcal{S}](M)$. To determine the preimage $q^{-1}(y)$, we first consider the preimage $q^{-1}(y)\cap \Delta$ in each simplex $\Delta\subset \mathcal{S}(M)$ that maps onto $\Delta([{\bf S}])$ and then they are glued together along the faces to give $q^{-1}(y)$. Let $q|_{\Delta}$ be the restriction of $q$ to the simplex $\Delta$ and let $\Delta_{[S_i]}$ be the face of $\Delta$ that is the preimage $q|_{\Delta}^{-1}([S_i])$. As in the general case (\ref{prod}), the preimage $q^{-1}(y)\cap \Delta$ is canonically homeomorphic to the product of simplices
\[
\Delta_{[S_0]}\times \Delta_{[S_1]}\times\cdots\times \Delta_{[S_k]}.
\]
The vertices of the simplex $\Delta_{[S_i]}\subset \mathcal{S}(M)$ are in the isotopy class of $S_i$ so its simplices are parallel spheres isotopic to the sphere $S_i$ and there is a natural {\it inside to outside} order (see \Cref{order}) on the vertices of each simplex in $\Delta_{[S_i]}$. 

Therefore, we will model $(\pi\circ q)^{-1}(x)$ for $x\in [\dot{\Delta}([{\bf S}])]$ by a multi-semi-simplicial set $ \mathcal{S}_{\bullet,\dots, \bullet}(M, [{\bf S}])$ whose realization is homeomorphic to $(\pi\circ q)^{-1}(x)$. The set $ \mathcal{S}_{n_0,\dots, n_k}(M, [{\bf S}])$ is the subset of $(k+\sum_i n_i)$-simplices of $\mathcal{S}(M)$ where exactly $(n_i+1)$ of its vertices lie over the $i$-th vertex of $x$ so there is natural order on vertices above the $i$-th vertex of $x$ for each $i$. On the other hand, the preimage of $\eta^{-1}(x)$, similar to \Cref{preimage}, is homeomorphic to 
 \[
(\pi\circ q)^{-1}(x)\hcoker \dD(M, \text{rel }\partial),
 \]
 which is in turn weakly equivalent to $||\mathcal{S}_{\bullet, \dots, \bullet}(M, [{\bf S}])\hcoker \dD(M, \text{rel }\partial)||$.

 Similar to \Cref{top}, we have a multi-semi-simplicial space $ \mathcal{S}^{\tau}_{\bullet,\dots, \bullet}(M, [{\bf S}])$ and an acyclic map
\[
k\colon ||\mathcal{S}_{\bullet, \dots, \bullet}(M, [{\bf S}])\hcoker \dD(M, \text{rel }\partial)||\to ||\mathcal{S}^{\tau}_{\bullet, \dots, \bullet}(M, [{\bf S}])\hcoker \tH(M, \text{rel }\partial)||.
\]
Since the map $||\mathcal{S}_{\bullet, \dots, \bullet}(M, [{\bf S}])\hcoker \dD(M, \text{rel }\partial)||\to \mathrm{B}\tH(M, \text{rel }\partial)$ factors through the map  $k$, to prove that $\eta^{-1}(x)$ is weakly homologically finite, it is enough to show that $||\mathcal{S}^{\tau}_{\bullet, \dots, \bullet}(M, [{\bf S}])\hcoker \tH(M, \text{rel }\partial)||$ is strongly homologically finite by putting it in a homotopy fiber sequence whose base and fiber are strongly homologically finite by induction on the number of prime factors. 
 
 By doing the bar construction model in each simplicial direction, we have homotopy fiber sequences similar to the homotopy fiber sequence \ref{fib4}. By applying \Cref{Kupers} we obtain a homotopy fiber sequence similar to (\ref{fib1}) whose fiber and the base are realizations of multi-semi-simplicial spaces with fewer simplicial directions to which we can apply induction on the number of simplicial directions. 
 
Let $M_1(S_0)$ and $M_2(S_0)$ be the submanifolds obtained by cutting $M$ along $S_0$ and $M_1(S_0)$ containing the boundary component $\partial_* M$. Suppose that $k'$ of spheres in $\{S_1, \dots, S_k\}$ lie in $M_1(S_0)$ and the rest are in $M_2(S_0)$. By doing the bar construction model and applying \Cref{Kupers}, we obtain a homotopy fiber sequence
 \[
 ||\mathrm{R}_{\bullet,\dots, \bullet}(M_2(S_0))||\to ||\mathcal{S}^{\tau}_{\bullet, \dots, \bullet}(M, [{\bf S}])\hcoker \tH(M, \text{rel }\partial)||\to ||\mathrm{L}_{\bullet,\dots, \bullet}(M_1(S_0))||,
 \]
 where the number of simplicial directions in $\mathrm{R}_{\bullet,\dots, \bullet}(M_2(S_0))$ and $\mathrm{L}_{\bullet,\dots, \bullet}(M_1(S_0))$ are respectively $k-k'$ and $k'$. Hence, it is easy to see that we can exhaust simplicial directions by considering homotopy fiber sequences and using \Cref{Kupers}. 
 
 To illustrate the idea, suppose $k=1$ and suppose the sphere $S_1$ lies in $M_2(S_0)$. Since the base point of $M$ lies in the component $M_1(S_0)$, we choose a base point for $M_2(S_0)$ that lies also on the boundary of $M$. Suppose $S_1$ separates $M_2(S_0)$ into two components $M_{2,1}(S_0,S_1)$ and $M_{2,2}(S_0,S_1)$ where the former contains the base point. Similar to \Cref{module1}, we have models for $\BD(M_{2,1}(S_0,S_1), \text{rel }\partial)$ and $\BD(M_{2,2}(S_0,S_1), \text{rel }\partial)$ as left and right $\mathfrak{D}$-module and we have a model for $\BD(M_{1}(S_0), \text{rel }\partial)$ as left $\mathfrak{D}$-module. Let us denote them respectively by $\mathcal{L}(M_{2,1}(S_0,S_1))$, $\mathcal{R}(M_{2,2}(S_0,S_1))$ and $\mathcal{L}(M_{1}(S_0))$. Then, similar to the weak equivalence (\ref{bar}), the iterated bar construction

 \[
||B_{\bullet}(\mathcal{L}(M_{1}(S_0)), \mathfrak{D}, B_{\bullet}(\mathcal{L}(M_{2,1}(S_0,S_1)), \mathfrak{D}, \mathcal{R}(M_{2,2}(S_0,S_1))))|| 
 \]
is weakly equivalent to $||\mathcal{S}^{\tau}_{\bullet, \bullet}(M, [{\bf S}])\hcoker \tH(M, \text{rel }\partial)||$. Hence, we have a fiber sequence
 \begin{equation}
\begin{gathered}
\begin{tikzpicture}[node distance=7cm, auto]
  \node (A) {$||B_{\bullet}(\mathcal{L}(M_{2,1}(S_0,S_1)), \mathfrak{D}, \mathcal{R}(M_{2,2}(S_0,S_1)))||$};
  \node (B) [right of=A] {$ ||\mathcal{S}^{\tau}_{\bullet,  \bullet}(M, [{\bf S}])\hcoker \tH(M, \text{rel }\partial)||$};
  \node (C) [below of=B, node distance=1.2cm] {$||B_{\bullet}(\mathcal{L}(M_{1}(S_0)), \mathfrak{D}, *)||.$};  

  \draw[->] (A) to node {$$} (B);
    \draw[->] (B) to node {$$} (C);

\end{tikzpicture}
\end{gathered}
\end{equation}
 But the base which is a one-sided bar construction that is weakly equivalent to $\BD(M_{1}(S_0), S_0, \text{rel }\partial M)$  by \Cref{Kupers}. 
 
 Hence, in general, we can use \Cref{Kupers} to reduce the number of simplicial directions by replacing one-sided bar constructions with the appropriate classifying space that ``makes the corresponding sphere boundary free''. By iterating this process for the base and the fiber and using \Cref{Kont} inductively, we conclude that the total space also is strongly homologically finite.

 \bibliographystyle{alpha}
\bibliography{reference}
\end{document}